\documentclass{amsart}

\usepackage{amssymb}
\usepackage{amsmath}
\usepackage{amsthm}
\usepackage{float}
\usepackage{hyperref}
\usepackage{subcaption}
\usepackage{tikz}
\usepackage[table]{xcolor}

\usepackage{pinlabel}
\makeatletter
\def\topt#1{\strip@pt\dimexpr #1*65536/\number\dimexpr 1pt}
\makeatother

\numberwithin{equation}{section}

\theoremstyle{plain}
\newtheorem{theorem}[equation]{Theorem}
\newtheorem{proposition}[equation]{Proposition}
\newtheorem{lemma}[equation]{Lemma}
\newtheorem{conjecture}[equation]{Conjecture}
\newtheorem{corollary}[equation]{Corollary}

\theoremstyle{definition}

\newtheorem{definition}[equation]{Definition}

\theoremstyle{remark}
\newtheorem{remark}[equation]{Remark}
\newtheorem{example}[equation]{Example}
\newtheorem*{acknowledgements}{Acknowledgements}

\newcommand{\cs}{\ensuremath{\mathbin{\#}}}

\newcommand{\redcrossing}{\tikz[baseline, yshift=0.4em]{
  \draw (135:0.4em) -- (315:0.4em);
  \draw (45:0.4em) -- (225:0.4em);
  \draw[thick, red] (0, 0) circle [radius=0.4em];
}}

\newcommand{\bluecrossing}{\tikz[baseline, yshift=0.4em]{
  \draw (135:0.4em) -- (315:0.4em);
  \draw (45:0.4em) -- (225:0.4em);
  \draw[thick, blue] (0, 0) circle [radius=0.4em];
}}

\title{The Ozsv\'ath--Szab\'o tau-invariant of braided satellites}
\author{Alex Eldridge}
\address{School of Mathematics, Georgia Institute of Technology, Atlanta, GA 30332}
\email{alex.eldridge@gatech.edu}

\hypersetup{
  pdftitle=The Ozsv\'ath--Szab\'o tau-invariant of braided satellites
}

\begin{document}

\begin{abstract}
We give formulas for the $ \tau $ and $ \varepsilon $ concordance invariants of satellite knots whose patterns are braided, meaning they wind around the solid torus without reversing.
Our methods lead us to define the class of squeezed patterns, analogous to squeezed knots as defined by Feller--Lewark--Lobb.
We show that all braided patterns are squeezed, and we give a $ \tau $ formula for squeezed patterns as well.
Towards a conjecture of Hedden, we show that no squeezed pattern, and thus no braided pattern, with winding number at least 2 induces a homomorphism on the concordance group.
\end{abstract}

\maketitle

\section{Introduction}

The Ozsv\'ath--Szab\'o $ \tau $-invariant, defined in terms of knot Floer homology, is a well-studied concordance invariant, giving, for example, a reproof of the Milnor conjecture~\cite{ozsvath-szabo-knot-floer-homology-03}. The bordered Heegaard Floer homology suite, developed by Lipshitz, Ozsv\'ath, and Thurston~\cite{lipshitz-ozsvath-thurston-bordered-heegaard-floer-18} and, more recently, its immersed curve interpretation, developed by Hanselman, Rasmussen, and Watson~\cite{hanselman-rasmussen-watson-bordered-floer-homology-24} provide methods for studying the knot Floer homology of satellites. For many patterns, these methods have given formulas for the behavior of $ \tau $ under satelliting. These include Whitehead doubles~\cite{hedden-knot-floer-homology-07}, cables~\cite{hom-bordered-heegaard-floer-14}, the Mazur pattern~\cite{levine-nonsurjective-satellite-operators-16}, and 1-bridge braids~\cite{chen-hanselman-satellite-knots-immersed-23-preprint}, among many others~\cite{bodish-genus-fiberedness-t-24-preprint, patwardhan-xiao-generalized-mazur-patterns-24-preprint, chen-zemke-zhou-applications-lspace-satellite-25-preprint}. We give a formula for $ \tau $ of ``braided'' satellites---satellites with patterns that wind around the solid torus without reversing, depicted below.

\begin{figure}[H]
  \labellist
  \pinlabel {$ \beta $} [r] at \topt{20mm} \topt{12.5mm}
  \pinlabel {$ P_\beta $} [r] at \topt{56mm} \topt{12.5mm}
  \endlabellist
  \centering
  \includegraphics{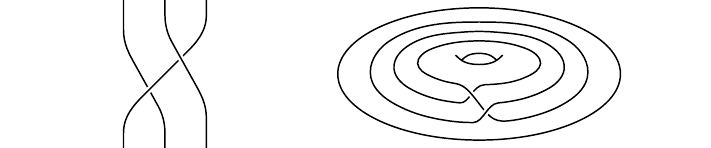}
  \caption{A braid $ \beta $ which closes to a knot and its correponding pattern $ P_\beta $, the closure of $ \beta $ in the solid torus.}
  \label{fig:braided-pattern}
\end{figure}

\begin{theorem}
  For an index-$ p $ braid $ \beta $ whose closure is a knot and a knot $ K \subset S^3 $, we have
  \[
    \tau(P_\beta(K)) = \begin{cases}
      p \tau(K) - \frac{1}{2}(p - 1) + \frac{1}{2}w(\beta) & \text{if } \varepsilon(K) = 1 \\
      p \tau(K) + \frac{1}{2}(p - 1) + \frac{1}{2}w(\beta) & \text{if } \varepsilon(K) = -1 \\
      \tau(P_\beta(U)) & \text{if } \varepsilon(K) = 0,
    \end{cases}
  \]
  where $ w(\beta) $ is the writhe of $ \beta $.
  \label{thm:tau-of-braided-satellites}
\end{theorem}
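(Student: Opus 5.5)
We split according to $\varepsilon(K)$ and reduce each case to a model companion knot.

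The case $\varepsilon(K)=0$ comes first. By Hom's work, $\varepsilon(K)=0$ means $CFK^\infty(K)$ is stably equivalent to $CFK^\infty(U)$ modulo acyclic summands of the relevant kind. The immersed curve for the complement of $K$ then consists of the horizontal line through the height-zero puncture column together with closed components. The pairing theorem of Hanselman--Rasmussen--Watson (or Lipshitz--Ozsv\'ath--Thurston) computes $\widehat{HFK}$ of the satellite, and $\tau$ is read off from the distinguished nontrivial component. That component is the same for $K$ and $U$, so $\tau(P_\beta(K))=\tau(P_\beta(U))$. Alternatively one could argue more cheaply via Hom's result that satellite $\tau$ depends only on the $\varepsilon$-equivalence class of $K$.

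For $\varepsilon(K)=\pm1$ we would first reduce to a model. If $\varepsilon(K)=1$ and $\tau(K)=n\ge 1$ (Hom's results allow any $n$), the essential component of the immersed curve of $K$ near the marked line matches that of $T_{2,2n+1}$ up to the first turn. This is the only data the pairing sees when computing $\tau$ of a satellite with a pattern whose winding is nonreversing, since the relevant generator pairs with the initial segment. So we may replace $K$ by $T_{2,2n+1}$, or more flexibly by any $L$-space knot with the same $\tau$. The negative case follows by mirroring: $P_\beta(K)$ mirrors to $P_{\overline\beta}(\overline K)$, with $w(\overline\beta)=-w(\beta)$, $\tau\mapsto-\tau$ and $\varepsilon\mapsto-\varepsilon$. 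This transforms the first formula into the second.

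The key step is computing $\tau$ for $K$ a positive torus knot, or more generally a knot with a braid-positive-ish model. Write $K$ as the closure of a positive braid $\gamma$ on $q$ strands. Then $P_\beta(K)$ is the closure of a braid on $pq$ strands obtained by cabling $\gamma$ by $p$ and inserting $\beta$. For quasipositive or strongly quasipositive braid closures, $\tau$ equals the slice genus, computed by the formula $\tau = (w - n + 1)/2$. The cabled braid has writhe $p^2 w(\gamma)+w(\beta)$ and index $pq$, so when $\beta$ is positive we get
\[
\tau=\tfrac12\bigl(p^2w(\gamma)+w(\beta)-pq+1\bigr)=p\tau(K)-\tfrac12(p-1)+\tfrac12 w(\beta),
\]
using $2\tau(K)=w(\gamma)-q+1$. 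For general $\beta$, we would sandwich: adding or removing one crossing of $\beta$ changes $\tau$ by at most one in a controlled direction (crossing-change inequalities). We then combine this with both an upper bound from an explicit cobordism and a lower bound. Here we expect the main obstacle: the crossing-change bounds give only $|\Delta\tau|\le1$ per full crossing change, whereas we need $\pm\frac12$ per generator. So we would instead show the satellite is squeezed, that is, it appears in a minimal-genus cobordism between a positive and a negative torus knot, built from the full twists of $K$'s cable. Since $\tau$ is additive along such optimal cobordisms (Feller--Lewark--Lobb-type argument), this yields exact equality.

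Finally, to pass from the torus-knot model back to general $K$ with $\varepsilon(K)=1$, we need the genuinely bordered step. We would check in the immersed-curve pairing diagram of $P_\beta$, drawn as a $(1,1)$-like multi-curve in the punctured torus lifted to the $p$-fold cover, that the generator realizing $\tau$ lies in the region where the curves of $K$ and of $T_{2,2\tau(K)+1}$ agree. Verifying this localization for an arbitrary braid is the second delicate point. We would handle it by showing that the Alexander grading of the relevant generator is determined by the winding data of $\beta$ through the writhe alone.
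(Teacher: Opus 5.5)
There is a genuine gap. The steps you flag as delicate carry all the content, and none of them is established.

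\textbf{The reduction to a model companion.} Passing from a general $K$ with $\varepsilon(K)=1$ to a model rests on an unproven localization claim in the immersed-curve pairing: that the generator realizing $\tau$ sees only an initial segment of the curve, and that its Alexander grading depends on $\beta$ only through $w(\beta)$. That claim is essentially the theorem. Braided patterns other than $1$-bridge braids are generally not $(1,1)$ patterns, so your ``$(1,1)$-like'' pairing picture is not available without a genuine higher-genus bordered computation. The model also need not exist. For example, $T_{2,3;2,-3}$ has $\varepsilon=1$ by Theorem~\ref{thm:epsilon-of-cable} but $\tau=0$ by Theorem~\ref{thm:tau-of-cable}, so no $T_{2,2n+1}$ and no $L$-space knot matches it.

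\textbf{The model computation is wrong as written.} Replacing each strand of $\gamma$ by $p$ parallel strands gives the blackboard-framed satellite, i.e.\ $P_{\beta'}(K)$ with $\beta'$ equal to $\beta$ followed by $w(\gamma)$ full twists. The Seifert-framed $P_\beta(K)$ requires inserting $-w(\gamma)$ full twists on the $p$ strands, so its writhe is $p\,w(\gamma)+w(\beta)$, not $p^2w(\gamma)+w(\beta)$. Your displayed identity is therefore off by $\tfrac12p(p-1)w(\gamma)$. For $\beta=\sigma_1$ and $K=T_{2,3}$ it returns $5=\tau(T_{2,3;2,7})$ instead of $\tau(T_{2,3;2,1})=2$. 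After the correction the braid is no longer positive, so using $\tau=(w-n+1)/2$ would need a separate quasipositivity argument.

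\textbf{General $\beta$.} For non-positive $\beta$ you only gesture at squeezing the satellite knot between torus knots in $S^3$, and give no construction.

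\textbf{The missing idea.} Squeeze the \emph{pattern} in $(S^1\times D^2)\times[0,1]$ between the cable patterns $C_{p,\pm q}$, and let Hom's cabling formula supply the exact $\tau$-gap for every $K$ at once. Then no model companion, quasipositivity, or bordered computation is needed.
\begin{enumerate}
\item Take $q$ coprime to $p$ and large enough that $\beta^+$ (all crossings made positive) is a subword of $(\sigma_1\cdots\sigma_{p-1})^q$. Then $\beta^-$ is a subword of $(\sigma_1^{-1}\cdots\sigma_{p-1}^{-1})^q$.
\item Deleting generators down to $\beta^-$, then changing back the $k^+$ crossings that are positive in $\beta$, gives $\Sigma^-\colon C_{p,-q}\to P_\beta$. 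It uses $(p-1)q+w(\beta)$ bands, so its genus is $\tfrac12((p-1)q+w(\beta))$.
\item Symmetrically, $\Sigma^+\colon P_\beta\to C_{p,q}$ has genus $\tfrac12((p-1)q-w(\beta))$. Counting genus as half the number of bands is what dissolves your per-generator $\tfrac12$ worry.
\item Satelliting gives cobordisms $K_{p,-q}\to P_\beta(K)\to K_{p,q}$ of total genus $(p-1)q$.
\item Theorem~\ref{thm:tau-of-cable} gives $\tau(K_{p,q})-\tau(K_{p,-q})=(p-1)q$ whenever $\varepsilon(K)\neq0$. So the two Livingston bounds coincide (Lemma~\ref{lemma:squeezing-tau}), and $\tau(P_\beta(K))=\tau(K_{p,-q})+\tfrac12((p-1)q+w(\beta))$, which is the stated formula.
\end{enumerate}
Working in the solid torus is essential. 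There the minimal genus from $C_{p,-q}$ to $C_{p,q}$ is $(p-1)q$, matching the cable gap, whereas in $S^3$ the minimal genus from $T_{p,-q}$ to $T_{p,q}$ is only $(p-1)(q-1)$.

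Your $\varepsilon(K)=0$ case via Hom's theorem and your mirroring step are fine.
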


Hedden and Raoux gave an upper and lower bound on $ \tau(P_\beta(K)) $ which apply in a more general setting~\cite{hedden-raoux-4dimensional-rational-genus-23-preprint}. They allow $ K $ to be a framed, rationally null-homologous knot in a general 3-manifold and the closure of $ \beta $ to have more than one component, and they use an extension of $ \tau $ to rationally null-homologous links in general 3-manifolds. Specializing to our setting, their bounds become
\[
  p \tau(K) - \frac{1}{2}(p - 1) + \frac{1}{2} w(\beta)
  \leq \tau(P_\beta(K))
  \leq p \tau(K) + \frac{1}{2}(p - 1) + \frac{1}{2} w(\beta).
\]
Our formula determines $ \tau(P_\beta(K)) $ exactly, splitting into cases depending on Hom's $ \varepsilon $ concordance invariant, also defined using knot Floer homology~\cite{hom-bordered-heegaard-floer-14}. We give a formula for the $ \varepsilon $ invariant of braided satellites as well.

\begin{theorem}
  For a braid $ \beta $ whose closure is a knot and a knot $ K $, we have 
  \[
    \varepsilon(P_\beta(K)) = \begin{cases}
      \varepsilon(K) & \text{if } \varepsilon(K) \neq 0 \\
      \varepsilon(P_\beta(U)) & \text{if } \varepsilon(K) = 0.
    \end{cases}
  \]
  \label{thm:epsilon-of-braided-satellites}
\end{theorem}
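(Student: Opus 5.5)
The plan is to reduce to three model companions and then use the standard behavior of $\varepsilon$ under satellite operations. Hom's theory gives the following: if $\varepsilon(K) = 0$ then $CFK^\infty(K)$ is stably equivalent to that of the unknot. More precisely, $K$ is $\varepsilon$-equivalent to $U$, and the bordered invariant $\widehat{CFD}$ of the complement, or equivalently the immersed curve, agrees with that of $U$ up to closed components and pieces that do not affect $\tau$ or $\varepsilon$ of satellites. By the immersed-curve pairing theorem of Hanselman--Rasmussen--Watson, the knot Floer complex of $P_\beta(K)$ is then obtained by pairing the curve of $K$ with the doubly pointed bordered diagram for $P_\beta$. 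Since $\varepsilon$-equivalence is preserved by satellite operations (Hom), we get $\varepsilon(P_\beta(K)) = \varepsilon(P_\beta(U))$ whenever $\varepsilon(K)=0$. This handles the second case.

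For $\varepsilon(K) \neq 0$, I use the symmetry $P_\beta(K)$ versus the mirror $\overline{P_\beta(K)} = P_{\overline\beta}(\overline K)$, together with $\varepsilon(\overline K) = -\varepsilon(K)$. This reduces everything to $\varepsilon(K) = 1$. In that case I compare $P_\beta(K)$ with $P_\beta(T)$ for a model knot $T$ with $\varepsilon(T)=1$ and $\tau(T)=\tau(K)$. By Hom's classification, the relevant "essential" segment of the immersed curve of $K$, namely the part of the curve passing through the height where $\tau$ is read off, is determined by $\tau(K)$ and the sign of $\varepsilon(K)$. The idea is to compute $\varepsilon$ of the satellite from this local data.

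Concretely, I would show that $\varepsilon(P_\beta(K)) = 1$ by exhibiting that $\tau(P_\beta(K))$ is attained at the lower bound of Hedden--Raoux. Then I would argue that the generator realizing $\tau$ in the paired complex has a nontrivial vertical arrow of the type that forces $\varepsilon = 1$. Equivalently, I would verify the criterion $\tau(P_\beta(K)_{2,-1}) \ne 2\tau(P_\beta(K)) \pm$ the appropriate value, using Hom's cabling characterization of $\varepsilon$. Here I would exploit that the $(2,-1)$-cable of a braided satellite is again a braided satellite, $P_{\beta'}(K)$ for a braid $\beta'$ of index $2p$. The value $\tau(P_{\beta'}(K))$ comes from Theorem~\ref{thm:tau-of-braided-satellites} in the case $\varepsilon(K)=1$. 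Plugging in gives
\[
\tau(P_{\beta'}(K)) = 2p\tau(K) - \tfrac12(2p-1) + \tfrac12 w(\beta'),
\]
with $w(\beta') = 4w(\beta) - (2p-1)$ or similar. Comparing this with $2\tau(P_\beta(K)) = 2p\tau(K) - (p-1) + w(\beta)$ and with Hom's formula $\tau(J_{2,-1}) \in \{2\tau(J), 2\tau(J)-1\}$ (the value depending on the sign of $\varepsilon(J)$) should pin down $\varepsilon(P_\beta(K)) = 1$. One also needs to rule out $\varepsilon(P_\beta(K)) = 0$, which would force $\tau(J_{2,-1}) = 0$ together with $\tau(J)=0$. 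The same numerical comparison excludes this.

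The main obstacle is this cabling bookkeeping: correctly computing the writhe of the braid for the $(2,-1)$-cable of $P_\beta$, which involves the doubled strands, the framing correction, and the extra half-twist. I would then check that the resulting value matches exactly one of Hom's cases. If the numerical bookkeeping proves delicate, the fallback is to read $\varepsilon$ directly from the paired immersed curve: show that the curve for $P_\beta(K)$ inherits from the curve of $K$ a first horizontal-then-vertical turn in the same direction, with $\varepsilon(K)=1$ giving a rightward turn. The braided (monotone) winding of $P_\beta$ guarantees that pairing does not reverse the direction of this turn.
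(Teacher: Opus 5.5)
Your concrete strategy for $\varepsilon(K)\neq 0$ is the paper's. You recognize $(P_\beta(K))_{2,q}$ as a braided satellite $P_{\beta'}(K)$ of index $2p$, compute its $\tau$ with Theorem~\ref{thm:tau-of-braided-satellites}, and match the result against the cases of Hom's cabling formula for $J=P_\beta(K)$. However, the computation everything rests on, the writhe of the cabled braid, is left undone, and your tentative value $w(\beta')=4w(\beta)-(2p-1)$ is wrong.

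The missing ingredient is framing. For $(C_{2,q}\circ P_\beta)(K)$ to equal the $(2,q)$-cable of $P_\beta(K)$, the doubled strand must follow the Seifert framing of $P_\beta$. The blackboard framing of the closed braid has self-linking $w(\beta)$, so you must insert $-w(\beta)$ full twists on the two parallel strands, which contribute $-2w(\beta)$ to the writhe. Each crossing of $\beta$ becomes four crossings of the same sign, and the cable adds $q$ half twists. This gives $w(\beta')=4w(\beta)-2w(\beta)+q=2w(\beta)+q$. Your guess agrees with this only when $w(\beta)=p-1$ (e.g.\ for $C_{p,1}$). In general, plugging it in with $\varepsilon(K)=1$ yields $\tau(J_{2,-1})-2\tau(J)=w(\beta)-p$, which is typically not in $\{0,-1\}$, so the comparison would produce a contradiction rather than a conclusion.

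With the correct writhe the argument closes. For $\varepsilon(K)=1$ and $q=-1$, Theorem~\ref{thm:tau-of-braided-satellites} gives $\tau(J_{2,-1})=2p\tau(K)-p+w(\beta)=2\tau(J)-1$. This is Hom's value for $\varepsilon(J)=1$. The case $\varepsilon(J)=-1$ would give $2\tau(J)$, and $\varepsilon(J)=0$ would force $\tau(J)=0=\tau(J_{2,-1})$.

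Your reduction to $\varepsilon(K)=1$ by mirroring is a legitimate variant, and it is genuinely needed if you use a single $q$. For $\varepsilon(K)=-1$ and $q=-1$ one gets $\tau(J_{2,-1})=2\tau(J)$, which cannot distinguish $\varepsilon(J)=-1$ from $\varepsilon(J)=0$. The paper instead keeps $q$ general and rules out $\varepsilon(J)=0$ by comparing $q=\pm1$.

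The rest of your proposal is not an argument and can be dropped:
\begin{itemize}
\item the model-knot comparison;
\item attaining the Hedden--Raoux bound, which by itself says nothing about $\varepsilon$;
\item the immersed-curve ``fallback,'' which just asserts that the turn direction is preserved.
\end{itemize}
The $\varepsilon(K)=0$ case is simply Hom's theorem that $\varepsilon(P(K))=\varepsilon(P(U))$ for every pattern, exactly as the paper cites it.
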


These results are a generalization of Hom's formulas for $ \tau $ and $ \varepsilon $ of cables~\cite{hom-bordered-heegaard-floer-14}, and they recover Chen and Hanselman's formulas for $ \tau $ and $ \varepsilon $ of satellites with 1\nobreakdash-bridge braid patterns~\cite{chen-hanselman-satellite-knots-immersed-23-preprint}.

The reader may be interested to know that our proofs do not use bordered Heegaard Floer homology directly, instead relying on formal properties of $ \tau $. In~\cite{feller-lewark-lobb-squeezed-knots-24}, Feller, Lewark, and Lobb used $ \tau $'s formal properties to show that for a \textit{squeezed} knot $ K $, that is, one appearing as a slice of a minimal-genus cobordism from a negative torus knot to a positive one, $ \tau(K) $ is determined by how far along in that cobordism the knot appears. We similarly define the class of squeezed patterns, which we show includes all braided patterns.

\begin{definition}
  \label{def:squeezed-pattern}
  A pattern $ P \subset S^1 \times D^2 $ is \textit{squeezed} if there exist positive coprime integers $ p, q $ and a pair of cobordisms $ \Sigma^-\colon C_{p, -q} \to P $ and $ \Sigma^+\colon P \to C_{p, q} $ both in $ (S^1 \times D^2) \times [0, 1] $, such that 
  \[
    g(\Sigma^-) + g(\Sigma^+) = (p - 1) q.
  \]
\end{definition}

Equivalently, a squeezed pattern must be a slice of a minimal-genus cobordism from $ C_{p, -q} $ to $ C_{p, q} $. We give a $ \tau $ formula for squeezed patterns, which we use to prove Theorem~\ref{thm:tau-of-braided-satellites}.

\begin{theorem}
  \label{thm:tau-of-squeezed-pattern-satellite}
  If $ P $ is a pattern squeezed by cobordisms $ \Sigma^-\colon C_{p, -q} \to P $ and $ \Sigma^+\colon P \to C_{p, q} $, then for any knot $ K $, we have
  \[
    \tau(P(K)) = \begin{cases}
      \tau(K_{p, -q}) + g(\Sigma^-) = \tau(K_{p, q}) - g(\Sigma^+) & \text{if } \varepsilon(K) \neq 0 \\
      \tau(P(U)) & \text{if } \varepsilon(K) = 0.
    \end{cases}
  \]
\end{theorem}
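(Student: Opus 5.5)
The plan is to use only formal properties of $\tau$: the genus bound $|\tau(J) - \tau(J')| \le g(\Sigma)$ for a cobordism $\Sigma\colon J \to J'$ in $S^3 \times [0,1]$, together with Hom's cable formula~\cite{hom-bordered-heegaard-floer-14}. For $\varepsilon(K)=\pm1$ this formula reads
\[
\tau(K_{p,q}) = p\tau(K) - \tfrac{1}{2}(p-1)(q \mp 1)\cdot(\pm 1)\ \text{(appropriately)},
\]
and in all cases $\tau(K_{p,q}) - \tau(K_{p,-q}) = (p-1)q$ whenever $\varepsilon(K)\neq 0$. Concretely:
\begin{itemize}
\item if $\varepsilon(K)=1$, then $\tau(K_{p,q}) = p\tau(K) + \frac{(p-1)(q-1)}{2}$ for all $q$ (of either sign);
\item if $\varepsilon(K)=-1$, then $\tau(K_{p,q}) = p\tau(K) + \frac{(p-1)(q+1)}{2}$.
\end{itemize}
In either case the difference between $q$ and $-q$ is $(p-1)q$. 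I would verify this carefully from Hom's statement first, since it is the one input that depends on $\varepsilon(K)\neq 0$.

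Next I embed the squeezing cobordisms into $S^3\times[0,1]$. Take the satellite operation along $K$: the solid torus $S^1\times D^2$ embeds in $S^3$ as a $0$-framed neighborhood of $K$. Crossing with $[0,1]$ carries $\Sigma^-$ to a cobordism $K_{p,-q} = C_{p,-q}(K) \to P(K)$, and $\Sigma^+$ to a cobordism $P(K) \to K_{p,q}$, with the same genera. The genus bound then gives
\[
\tau(P(K)) \le \tau(K_{p,-q}) + g(\Sigma^-), \qquad \tau(P(K)) \ge \tau(K_{p,q}) - g(\Sigma^+).
\]
When $\varepsilon(K)\ne 0$ the two right-hand sides agree, because $g(\Sigma^-)+g(\Sigma^+) = (p-1)q = \tau(K_{p,q})-\tau(K_{p,-q})$. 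The inequalities are therefore equalities, which proves the first case.

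For $\varepsilon(K)=0$, I would use that $K$ is then $\varepsilon$-equivalent to the unknot, meaning $\mathit{CFK}^\infty(K)\simeq \mathit{CFK}^\infty(U)\oplus A$ with $A$ acyclic. This alone does not immediately give $\tau(P(K))=\tau(P(U))$ for an arbitrary pattern. The route through formal properties is to find the right input about concordance: $\varepsilon(K)=0$ means $K\#-U = K$ has $\varepsilon=0$, which is not a concordance statement. Instead I would apply the same squeezing argument to $U$, where the bounds are not sharp, so this does not work directly either.

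The better approach is stable equivalence. By Hom, $\varepsilon(K)=0$ implies that $\tau(J\#K) = \tau(J)$ and $\varepsilon(J\#K)=\varepsilon(J)$ for all $J$. What is needed, though, is a statement about satellites. I would use the known fact (Hom, and Chen--Hanselman for immersed curves) that the immersed curve of the complement of $K$ with $\varepsilon(K)=0$ consists of a horizontal line component, matching that of $U$, plus closed components. The horizontal component determines $\tau$ of any satellite via pairing, since $\tau$ is read off the component of the pairing carrying the generator of $\widehat{HF}(S^3)$. Hence $\tau(P(K))=\tau(P(U))$.

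This last case is the main obstacle, since it seems to require an input beyond formal properties. The first thing I would try is to cite the immersed-curve fact that $\varepsilon(K)=0$ forces the essential curve component to be the unknot's, together with the pairing theorem for satellites.
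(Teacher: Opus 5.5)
Your argument for $\varepsilon(K)\neq 0$ is the paper's: carry $\Sigma^\pm$ into $S^3\times[0,1]$ via the satellite embedding, use Hom's cabling formula to get $\tau(K_{p,q})-\tau(K_{p,-q})=(p-1)q=g(\Sigma^-)+g(\Sigma^+)$, and squeeze. Your bulleted cable formulas are correct, but the preliminary displayed formula with $\mp$ is garbled and should be dropped. For $\varepsilon(K)=0$ the paper also uses a non-formal input, but simply cites Hom's theorem from \cite{hom-bordered-heegaard-floer-14} that $\tau(P(K))=\tau(P(U))$ for every pattern $P$; your immersed-curve and pairing route is a valid alternative justification but unnecessary.
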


\begin{figure}
  \centering
  \labellist
  \pinlabel {$ [K] \mapsto [U] $} [B] at \topt{20mm} \topt{2mm}
  \pinlabel {$ [K] \mapsto [K] $} [B] at \topt{60mm} \topt{2mm}
  \pinlabel {$ [K] \mapsto [K^r] $} [B] at \topt{100mm} \topt{2mm}
  \endlabellist
  \includegraphics{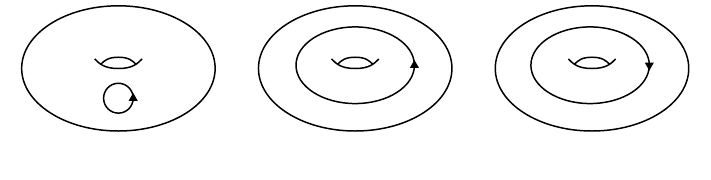}
  \caption{Three patterns and their induced maps on $ \mathcal{C} $, which are conjectured to be the only homomorphisms induced by satellites.}
  \label{fig:trivial-homomorphisms}
\end{figure}

These $ \tau $ formulas can be applied to a conjecture of Hedden. Satelliting by a pattern $ P $ induces a map $ [P]\colon \mathcal{C} \to \mathcal{C} $ on the concordance group given by $ [K] \mapsto [P(K)] $. One might ask how these maps interact with the group operation on $ \mathcal{C} $. In particular, when is this map $ [P] $ a homomorphism? That is, for which patterns $ P $ is $ P(K \cs K') $ always concordant to $ P(K) \cs P(K') $? This is clearly true for the three patterns in Figure~\ref{fig:trivial-homomorphisms}. Hedden conjectured that the homomorphisms induced by these three patterns are the only ones induced by satelliting.

\begin{conjecture}[{\cite[Conjecture 1]{hedden-pinzon-caicedo-satellites-infinite-rank-21}}]
  \label{conj:satellites-not-homomorphisms}
  For a pattern $ P $, if $ [P] $ is a homomorphism, then it is one of the maps $ [K] \mapsto [U] $, $ [K] \mapsto [K] $, or $ [K] \mapsto [K^r] $.
\end{conjecture}

Much partial progress has been made in proving this conjecture. In~\cite{lidman-miller-pinzon-caicedo-linking-number-obstructions-24}, it was shown that patterns satisfying a technical condition on linking numbers in branched covers do not induce homomorphisms, and in~\cite{johanningsmeier-kim-miller-partial-resolution-heddens-25}, this condition was shown to hold for all patterns with a winding number that is even but not divisible by 8.

In~\cite{miller-homomorphism-obstructions-satellite-23}, Miller gave an obstruction for a pattern $ P $ inducing a homomorphism in terms of $ \tau(P(K)) $. From this obstruction and Theorem~\ref{thm:tau-of-squeezed-pattern-satellite}, we can deduce that squeezed patterns with winding number at least 2, as well as patterns sufficiently close to those squeezed patterns, do not induce homomorphisms on concordance.

\begin{corollary}
  \label{cor:squeezed-not-homomorphism}
  A squeezed pattern $ P $ with winding number $ p \geq 2 $ does not induce a homomorphism on the concordance group. In fact, if a pattern $ Q $ cobounds a cobordism with $ P $ in $ (S^1 \times D^2) \times [0, 1] $ with genus strictly less than $ \left\lceil \frac{p - 1}{2} \right\rceil $, then $ Q $ does not induce a homomorphism either.
\end{corollary}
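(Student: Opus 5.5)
The plan is to use Theorem~\ref{thm:tau-of-squeezed-pattern-satellite} together with Hom's cabling formula to show that $\tau(P(nK))$ is affine in $n$ but with \emph{different} constant terms for $n>0$ and $n<0$. A homomorphism would force $\tau\circ[Q]$ to be genuinely linear, and these two facts are incompatible. This is essentially the mechanism of Miller's obstruction, and I would run it directly.

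\emph{Step 1: compute $\tau(P(nK))$.} Fix $K = T_{2,3}$, so $\tau(K)=1$ and $\varepsilon(K)=1$. For $n \neq 0$, the knot $nK$ (the $n$-fold connected sum, or of $-K$ if $n<0$) has $\varepsilon = \operatorname{sign}(n)$ and $\tau = n$. Hom's cable formula gives
\[
  \tau(J_{p,-q}) = p\tau(J) + \tfrac12(p-1)(-q-1)
\]
when $\varepsilon(J)=1$, and
\[
  \tau(J_{p,-q}) = p\tau(J) + \tfrac12(p-1)(-q+1)
\]
when $\varepsilon(J)=-1$. Feeding this into Theorem~\ref{thm:tau-of-squeezed-pattern-satellite} gives
\[
  \tau(P(nK)) = np + c_\pm \quad (n \gtrless 0),
\]
where $c_+ = g(\Sigma^-) - \tfrac12(p-1)(q+1)$ and $c_- = g(\Sigma^-) - \tfrac12(p-1)(q-1)$. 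In particular $c_- - c_+ = p-1 \geq 1$.

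\emph{Step 2: compare $Q$ with $P$.} Let $S \subset (S^1\times D^2)\times[0,1]$ be a cobordism of genus $g$ between $P$ and $Q$. Satelliting $S$ along $K\times[0,1]$ gives a genus-$g$ cobordism in $S^3\times[0,1]$ from $P(J)$ to $Q(J)$ for every knot $J$. Hence
\[
  |\tau(Q(J)) - \tau(P(J))| \leq g .
\]

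\emph{Step 3: use the homomorphism property.} Suppose $[Q]$ is a homomorphism. Then $Q(nK)$ is concordant to the $n$-fold sum of $Q(K)$ for $n>0$, and $Q(-K)$ is concordant to $-Q(K)$, since $Q(K)\cs Q(-K)$ is concordant to $Q(U)$, which is slice. Therefore $\tau(Q(nK)) = na$ for all $n\in\mathbb{Z}$, where $a = \tau(Q(K))$. Step 2 then gives
\[
  |n(a-p) - c_+| \leq g \ \text{ for all } n>0, \qquad |n(a-p) - c_-| \leq g \ \text{ for all } n<0 .
\]
Letting $|n|\to\infty$ forces $a=p$, and then $|c_+|\leq g$ and $|c_-|\leq g$. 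Consequently
\[
  p-1 = |c_- - c_+| \leq 2g ,
\]
so $g \geq \lceil (p-1)/2\rceil$ because $g$ is an integer. This contradicts the hypothesis on the genus. Taking $Q=P$ with the product cobordism ($g=0$) gives the first assertion, since $p \geq 2$ makes $\lceil (p-1)/2\rceil \geq 1 > 0$.

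\emph{Main obstacle.} I expect the main care to be needed in checking the exact form of Hom's cable formula for the negative cable $K_{p,-q}$ in both $\varepsilon$ cases. Everything rests on the constant terms differing by exactly $p-1$. I would double-check this against the bounds of Hedden--Raoux quoted above, whose gap is also $p-1$.
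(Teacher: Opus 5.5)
Your proof is correct, and its skeleton is the paper's. Theorem~\ref{thm:tau-of-squeezed-pattern-satellite} combined with Hom's cabling formula makes $\tau(P(J)) - p\tau(J)$ equal to a constant $c_\pm$ according to $\varepsilon(J)=\pm1$, with $c_- - c_+ = p-1$. Comparing this with a linear $\tau\circ[Q]$ through $|\tau(P(J))-\tau(Q(J))|\leq g$ then forces $2g\geq p-1$.

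The difference is in how the homomorphism hypothesis enters. The paper cites Miller's Proposition~\ref{prop:tau-of-homom-is-mult-by-winding-number} to get $\tau(Q(K))=p\tau(K)$ for every $K$. It then evaluates at arbitrary $K^\pm$ with $\varepsilon(K^\pm)=\pm1$, packaging the result as Proposition~\ref{prop:squeezed-far-from-homomorphism}, which applies to any pattern whose $\tau$ is a cable's plus a constant. You instead use only additivity of $\tau\circ[Q]$ on multiples of $T_{2,3}$. You pin down the slope $\tau(Q(T_{2,3}))=p$ by letting $|n|\to\infty$ against the bounded-genus comparison with $P$. This is essentially the mechanism behind Miller's proposition, run directly with $P$ itself as the comparison pattern. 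What it buys is self-containment, and you never need the winding number of $Q$; the paper implicitly uses that it equals $p$ by cobordism invariance. Your intermediate bound $g\geq\max(|c_+|,|c_-|)$ is exactly the paper's $\left|\frac12(p-1)q+c\right|+\frac12(p-1)$.

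Your integrality step also correctly produces the ceiling in the statement. The paper's written proof says $\lfloor\frac{p-1}{2}\rfloor$, which is a slip: with the floor, even the first assertion would not follow for $p=2$. One small typo: in Step 2 you mean satelliting along $J\times[0,1]$, not $K\times[0,1]$.
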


This paper came out of an attempt to obstruct some of the patterns known not to satisfy the condition in \cite{lidman-miller-pinzon-caicedo-linking-number-obstructions-24} from inducing a homomorphism. These are the ``alternating cable'' patterns~\cite[Figure~8]{lidman-miller-pinzon-caicedo-linking-number-obstructions-24} (also alluded to in~\cite[Example~5.12]{miller-homomorphism-obstructions-satellite-23}), and a particular index-8 braid~\cite[Figure~4]{johanningsmeier-kim-miller-partial-resolution-heddens-25}. The pattern depicted in Figure~\ref{fig:braided-pattern} is the $ (3, 1) $-alternating cable. All of these patterns are braided, so we show they do not induce homomorphisms.

In the last section we give examples of non-braided squeezed patterns and non-squeezed patterns. We also give a method for constructing non-braided squeezed patterns with any winding number at least 2.

\begin{acknowledgements}
We would like to thank Jennifer Hom, whose guidance and encouragement has been invaluable, and who, along with Neda Bagherifard and Sally Collins, invited us to speak about this project as a work in progress at the 2026 AMS Spring Southeastern Sectional Meeting, which helped to crystallize the ideas in this paper. We also thank Matt Hedden, Tye Lidman, and JungHwan Park for helpful comments.
\end{acknowledgements}

\section{Squeezed patterns}
\label{sec:squeezed-patterns}

In this paper, we require that cobordisms, both in $ S^3 \times [0, 1] $ and in $ (S^1 \times D^2) \times [0, 1] $, are smooth, compact, connected, and oriented.

We note a few facts that follow quickly from the definition of a squeezed pattern. First, winding number is an invariant of cobordisms in $ (S^1 \times D^2) \times [0, 1] $, so the squeezing cable patterns must have $ p $ equal to the winding number of the squeezed pattern. When the winding number is one, all cable patterns are isotopic to the core of the solid torus, so the minimal genus of a cobordism between any two such patterns is zero. Thus for winding-number-one patterns, being squeezed is the same as being concordant to the core.

Being squeezed is preserved under concordance. For a squeezed pattern $ P $ and a concordance $ \Sigma\colon P \to P' $, gluing $ \Sigma $ and $ \bar\Sigma $ to squeezing cobordisms for $ P $ give a squeezing of $ P' $.

To show a pattern is squeezed, it suffices to show that, for integers $ q < r $, it is a slice of a cobordism from $ C_{p, q} $ to $ C_{p, r} $ with Euler characteristic $ -(p - 1)(r - q) $. Such a cobordism can always be extended to a squeezing cobordism as in Definition~\ref{def:squeezed-pattern} by repeatedly gluing on the obvious cobordism from $ C_{p, q} $ to $ C_{p, q+1} $ given by attaching $ p - 1 $ bands. Note that $ q $ and $ r $ need not be coprime to $ p $, and may even have the same sign.

\subsection{The \texorpdfstring{$ \tau $}{τ} invariant and its formal properties}

Using knot Floer homology, Ozsv\'ath and Szab\'o defined the concordance invariant $ \tau $, and showed that it has the following properties.

\begin{theorem}[\cite{ozsvath-szabo-knot-floer-homology-03}]
  \label{thm:tau-is-slice-torus}
  The $ \tau $ invariant satisfies:
  \begin{enumerate}
    \item $ \tau(K \# J) = \tau(K) + \tau(J) $ for all knots $ K $, $ J $.
    \item $ \tau(K) \leq g_4(K) $ for all knots $ K $.
    \item $ \tau(T_{p, q}) = g_4(T_{p, q}) = \frac{1}{2}(p - 1)(q - 1) $ for $ p, q > 0 $.
  \end{enumerate}
\end{theorem}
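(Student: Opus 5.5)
The plan is to work directly from the definition of $ \tau $ via the Alexander filtration on $ \widehat{CF}(S^3) $. Recall $ \tau(K) $ is the least $ s $ such that the inclusion $ \iota_s\colon \mathcal{F}_s(K) \to \widehat{CF}(S^3) $ is nontrivial on homology. I would prove the three properties in the order (3), (1), (2). Property (3) is the most computational, and it gives the lower bound needed to see that the inequality in (2) is sharp.

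For (3), I would use that positive torus knots $ T_{p,q} $ admit positive lens space surgeries, so they are L-space knots. Their (full) knot Floer complex is then a staircase determined by the symmetrized Alexander polynomial. From the staircase, the generator of $ \widehat{HF}(S^3) \cong \mathbb{F} $ sits in top Alexander grading, namely $ \deg \Delta_{T_{p,q}} = \frac{1}{2}(p-1)(q-1) = g_3(T_{p,q}) $. Hence $ \tau(T_{p,q}) = g_3(T_{p,q}) $. Once (2) is proved, the chain $ \tau \le g_4 \le g_3 $ forces $ g_4(T_{p,q}) = \frac{1}{2}(p-1)(q-1) $ as well. This does not require Kronheimer--Mrowka.

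For (1), I would invoke the Künneth theorem: $ \widehat{CFK}(K \cs J) $ is filtered chain homotopy equivalent to $ \widehat{CFK}(K) \otimes \widehat{CFK}(J) $, with the Alexander filtration adding. Choose cycles $ x \in \mathcal{F}_{\tau(K)}(K) $ and $ y \in \mathcal{F}_{\tau(J)}(J) $ representing the generators. Then $ x \otimes y $ lies in filtration $ \tau(K)+\tau(J) $ and represents the generator, so $ \tau(K\cs J) \le \tau(K)+\tau(J) $. For the reverse inequality, I would use the duality $ \tau(\overline{K}) = -\tau(K) $, which comes from the fact that $ \widehat{CFK}(\overline K) $ is the dual complex. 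Applying the subadditivity just proved to $ (K\cs J) \cs \overline{J} $, which is concordant to $ K $, gives the opposite bound. Concordance invariance of $ \tau $ itself would be a consequence of (2) applied to genus-zero cobordisms.

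The main obstacle is (2), or more precisely its cobordism version $ |\tau(K) - \tau(K')| \le g(\Sigma) $ for a genus-$ g $ cobordism $ \Sigma $ in $ S^3 \times [0,1] $. I would first try to decompose $ \Sigma $ into births, deaths, and saddles, arranged so that it factors as a sequence of band moves. Then I would construct filtered maps on knot Floer complexes induced by each elementary piece and control how each map shifts the Alexander grading, by analogy with Sarkar's argument or via Zemke's link cobordism maps. The key point is that each genus handle (a pair of saddles) shifts the filtration by at most one while still sending the generator of $ \widehat{HF}(S^3) $ to the generator. If this becomes delicate, the fallback is Ozsváth--Szabó's original route. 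That route caps the surface off in the trace of large $ -N $ surgery on $ K $, identifies $ \widehat{HF}(S^3_{-N}(K)) $ in a given spin$^c$ structure with a subquotient of the filtered complex, and uses the adjunction inequality for the capped surface to bound the spin$^c$ structures in which the cobordism map is nonzero. This translates into $ \tau(K) \le g_4(K) $.
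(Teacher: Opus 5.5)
The paper does not prove this statement; it quotes it from \cite{ozsvath-szabo-knot-floer-homology-03}, so the relevant comparison is with Ozsv\'ath--Szab\'o's argument. Three parts of your plan are, in outline, their proof: (3), where lens space surgeries make $T_{p,q}$ an L-space knot, the generator of $\widehat{HF}(S^3)$ is the top generator of the staircase, so $\tau = \deg\Delta = g_3$, and then $\tau \le g_4 \le g_3$ gives the Milnor conjecture without Kronheimer--Mrowka; the K\"unneth proof of subadditivity in (1); and your fallback for (2).

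Your primary route for (2) is the later approach of Sarkar (grid diagrams) or Zemke (link cobordism maps with grading formulas). Its advantage is that it gives the relative bound $|\tau(K)-\tau(K')| \le g(\Sigma)$, and hence concordance invariance, directly. Be aware, though, that your sentence about each genus handle shifting the filtration by at most one while preserving the generator \emph{is} that theorem, not a step in proving it. The intermediate slices are links, so you need link complexes and maps for saddles and births. The real content is computing the Alexander shifts of those maps and checking that the composite is an isomorphism on $\widehat{HF}(S^3)$. Citing those results is fine, but what you have written does not prove them.

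There is one genuine ordering problem. Your reverse inequality in (1) uses concordance invariance, which you obtain from the relative version of (2). On your primary route this is consistent. On the fallback route it is circular. The Ozsv\'ath--Szab\'o argument only bounds $\tau(K)$ by the genus of a surface bounded by $K$ in $B^4$ (or in a negative-definite $W$ with $\partial W = S^3$). Concordance invariance, and the relative bound, are then deduced by combining that bound with additivity.

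You can make (1) independent of (2) using the duality you already have. Mirroring commutes with connected sum, so subadditivity applied to $\overline{K} \cs \overline{J}$, together with $\tau(\overline{L}) = -\tau(L)$, gives $-\tau(K \cs J) \le -\tau(K) - \tau(J)$, which is the reverse inequality. If you keep the concordance argument instead, $\overline{J}$ there must be the reversed mirror, since $J \cs \overline{J}$ need not be slice when $\overline{J}$ is the plain mirror. This is harmless, because $\tau$ does not see orientation.
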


Shortly afterwards, Livingston~\cite{livingston-computations-ozsvath-szabo-04} recognized that these properties alone are enough to calculate the value of $ \tau $ for many knots by getting bounds from cobordisms. If $ \Sigma\colon K_0 \to K_1 $ is a genus-$ g $ cobordism, then $ K_1 \cs -K_0 $ bounds a genus-$ g $ surface, and thus 
\[
  \tau(K_1) - \tau(K_0) = \tau(K_1 \cs - K_0) \leq g_4(K_1 \cs - K_0) \leq g(\Sigma).
\]
So, a cobordism gives a bound on how far apart the values of $ \tau $ can be on the ends, and knowing $ \tau $ on one end gives a bound for $ \tau $ on the other.

Since then, many other concordance invariants that also satisfy the properties in Theorem~\ref{thm:tau-is-slice-torus} have been discovered. Such invariants are called \textit{slice-torus}~\cite{lewark-rasmussens-spectral-sequences-14}. Feller, Lewark, and Lobb expanded Livingston's arguments to show that the value of any slice-torus invariant can be determined using only its formal properties for a large class of knots called \textit{squeezed} knots~\cite{feller-lewark-lobb-squeezed-knots-24}. Squeezed knots are knots which appear as a slice of a minimal-genus cobordism from a negative torus knot to a positive one.

\begin{remark}
The reader ought to be warned that for a pattern $ P $ squeezed between $ C_{p, -q} $ and $ C_{p, q} $, the knot $ P(U) $ is not in general squeezed between $ T_{p, -q} $ and $ T_{p, q} $. The difference is due to the fact that the minimal genus of a cobordism from $ C_{p, -q} $ to $ C_{p, q} $ in $ (S^1 \times D^2) \times [0, 1] $  is $ (p - 1)q $, but the minimal genus of a cobordism from $ T_{p, -q} $ to $ T_{p, q} $ in $ S^3 \times [0, 1] $ is $ (p - 1)(q - 1) $. This relates to the $ \varepsilon(K) = 0 $ case in which our squeezing argument breaks down, discussed later. In some sense, what makes our results possible is that the restriction that our cobordisms be in $ (S^1 \times D^2) \times [0, 1] $ is offset by the fact that we get a bit more leeway with the genus of our cobordisms.
\end{remark}

By slicing the minimal-genus cobordism for a squeezed knot $ K $, we get two cobordisms with torus knots at one end and $ K $ at the other, which give an upper and lower bound for $ \tau(K) $, and these bounds coincide.
This is shown for a general slice-torus invariant in Lemma 3.5 of~\cite{feller-lewark-lobb-squeezed-knots-24}.
It turns out that what makes the bounds coincide is that the genus of the cobordism of which $ K $ is a slice is equal to the difference of the values of the slice-torus invariant on the ends.
As we are only interested in calculating $ \tau $ and not other slice-torus invariants, we will use a slight generalization of that lemma, where the knots on the ends need not be torus knots.

\begin{lemma}
  \label{lemma:squeezing-tau}
  For a knot $ K $ with cobordisms $ \Sigma^-\colon K^- \to K $ and $ \Sigma^+\colon K \to K^+ $ such that 
  \[
    g(\Sigma^-) + g(\Sigma^+) = \tau(K^+) - \tau(K^-),
  \]
  we have
  \[
    \tau(K) = \tau(K^-) + g(\Sigma^-) = \tau(K^+) - g(\Sigma^+).
  \]
\end{lemma}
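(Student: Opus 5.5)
The proof reduces to applying the Livingston-type cobordism inequality, recalled just above, to each of the two given cobordisms and then using the hypothesis to force equality. For a genus-$g$ cobordism $\Sigma\colon K_0 \to K_1$, the knot $K_1 \cs -K_0$ bounds a genus-$g$ surface in $B^4$. Parts (1) and (2) of Theorem~\ref{thm:tau-is-slice-torus}, together with $\tau(-K_0) = -\tau(K_0)$, then give $\tau(K_1) - \tau(K_0) \leq g(\Sigma)$. The last identity follows from additivity and sliceness of $K_0 \cs -K_0$, since a slice knot satisfies $0 \le \tau$ and $0 \le \tau$ of its mirror by part (2) applied twice.

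Applying this to $\Sigma^-\colon K^- \to K$ gives
\[
  \tau(K) - \tau(K^-) \leq g(\Sigma^-), \quad\text{i.e.}\quad \tau(K) \leq \tau(K^-) + g(\Sigma^-).
\]
Applying it to $\Sigma^+\colon K \to K^+$ gives
\[
  \tau(K^+) - \tau(K) \leq g(\Sigma^+), \quad\text{i.e.}\quad \tau(K) \geq \tau(K^+) - g(\Sigma^+).
\]
Chaining these yields
\[
  \tau(K^+) - g(\Sigma^+) \leq \tau(K) \leq \tau(K^-) + g(\Sigma^-).
\]
The hypothesis $g(\Sigma^-) + g(\Sigma^+) = \tau(K^+) - \tau(K^-)$ rearranges to $\tau(K^-) + g(\Sigma^-) = \tau(K^+) - g(\Sigma^+)$. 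So the lower and upper bounds coincide, both inequalities are equalities, and $\tau(K)$ equals this common value, as claimed.

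No step is a real obstacle. The only point needing care is orientation and mirror conventions: we must be sure the cobordism $K_0 \to K_1$ really yields a surface for $K_1 \cs -K_0$ rather than for its mirror. This is handled by reversing the cobordism if needed, since the genus bound then gives the same inequality with the roles of the ends fixed as written.
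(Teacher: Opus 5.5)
This is essentially the paper's proof: you apply the cobordism bound to $\Sigma^-$ and $\Sigma^+$ to get $\tau(K^+)-g(\Sigma^+)\le\tau(K)\le\tau(K^-)+g(\Sigma^-)$, and then note that the hypothesis makes the two bounds equal. One side remark is wrong, though: part (2) gives $\tau(J)\le 0$ for a slice knot $J$, not $0\le\tau(J)$, and properties (1)--(3) alone do not force $\tau(J)\ge 0$, so you should justify $\tau(-K_0)=-\tau(K_0)$ by the fact that $\tau$ is a concordance invariant, which gives $\tau(K_0\cs -K_0)=\tau(U)=0$. Your orientation worry is unnecessary, since $g_4(J)=g_4(-J)$ yields $|\tau(K_1)-\tau(K_0)|\le g(\Sigma)$ in both directions.
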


The proof of this is almost identical to the corresponding proof in~\cite{feller-lewark-lobb-squeezed-knots-24}, but we reproduce it here for clarity.

\begin{proof}
By drilling out a band between $ K^- $ and $ K $ in $ \Sigma^- $, we get a surface in $ B^4 $ with genus $ g(\Sigma^-) $ and boundary $ K \cs -K^- $, so $ g_4(K \cs -K^-) \leq g(\Sigma^-) $. Thus, by the properties in Theorem~\ref{thm:tau-is-slice-torus}, $ \tau(K) - \tau(K^-) \leq g(\Sigma^-) $. The same process for $ \Sigma^+ $ gives the bound $ \tau(K^+) - \tau(K) \leq g(\Sigma^+) $. Putting these bounds together gives
\[
    \tau(K^+) - g(\Sigma^+) \leq \tau(K) \leq \tau(K^-) + g(\Sigma^-).
\]
By hypothesis, the lower and upper bounds are equal, giving the desired equalities.
\end{proof}

\subsection{The \texorpdfstring{$ \tau $}{τ} invariant and squeezed patterns}

In~\cite{hom-bordered-heegaard-floer-14}, Hom defined the $ \{-1, 0, 1\} $-valued concordance invariant $ \varepsilon $, and gave the following formula for $ \tau $ of the $ (p, q) $-cable of a knot $ K $ in terms of $ p $, $ q $, $ \tau(K) $, and $ \varepsilon(K) $:

\begin{theorem}[{\cite[Theorem 1]{hom-bordered-heegaard-floer-14}}]
  \label{thm:tau-of-cable}
  For a knot $ K $ and $ p > 0 $,
  \[
    \tau(K_{p, q}) = \begin{cases}
      p \tau(K) + \frac{1}{2}(p - 1)(q - 1) & \text{if } \varepsilon(K) = 1 \\
      p \tau(K) + \frac{1}{2}(p - 1)(q + 1) & \text{if } \varepsilon(K) = -1 \\
      \tau(T_{p, q}) & \text{if } \varepsilon(K) = 0.
    \end{cases}
  \]
\end{theorem}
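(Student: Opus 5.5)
The statement is Hom's cabling formula~\cite{hom-bordered-heegaard-floer-14}, which the paper only cites; it cannot be obtained from the formal properties of Theorem~\ref{thm:tau-is-slice-torus} alone, so the plan below is a Floer-theoretic sketch. I would first reduce to a single sign of $ \varepsilon $ by mirroring. Mirroring gives $ \tau(-K) = -\tau(K) $ and $ \varepsilon(-K) = -\varepsilon(K) $, and $ -(K_{p,q}) = (-K)_{p,-q} $. If $ \varepsilon(K) = -1 $, applying the $ \varepsilon = 1 $ formula to $ -K $ with slope $ -q $ gives
\[
  \tau(K_{p,q}) = -\Bigl(p\tau(-K) + \tfrac{1}{2}(p-1)(-q-1)\Bigr) = p\tau(K) + \tfrac{1}{2}(p-1)(q+1).
\]
So it suffices to treat $ \varepsilon(K) = 1 $ and $ \varepsilon(K) = 0 $.

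For $ \varepsilon(K) = 0 $, I would use Hom's characterization that $ CFK^\infty(K) \simeq CFK^\infty(U) \oplus A $ with $ A $ acyclic. The satellite formula (bordered pairing $ \widehat{CFA}(S^1\times D^2, C_{p,q}) \boxtimes \widehat{CFD}(S^3 \setminus K) $) is functorial in the complement's type D structure. Since that structure is determined by $ CFK^\infty $, the summand $ A $ contributes nothing that changes the generator realizing $ \tau $. Hence $ \tau(K_{p,q}) = \tau(U_{p,q}) = \tau(T_{p,q}) $.

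For $ \varepsilon(K) = 1 $, I would work with immersed curves. The invariant $ \widehat{HF}(S^3 \setminus K) $ is a curve in the punctured torus, lifted to the plane minus lattice points. It has a distinguished component $ \gamma_0 $ that wraps around the cylinder. Its height where it crosses the vertical line encodes $ \tau(K) $, and the direction in which it turns afterwards (up or down) encodes $ \varepsilon(K) $. The key steps are:
\begin{enumerate}
  \item Invoke the Hanselman--Watson cabling construction. The curve for $ K_{p,q} $ is obtained by drawing $ p $ copies of the curve for $ K $, each scaled vertically by $ p $ and staggered in height by $ q $, and then translating the pegs horizontally so they become collinear.
  \item Identify the essential component of the result and read off the height at which it first crosses the vertical axis.
  \item Compute that height. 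The $ p $ staggered copies of $ \gamma_0 $ are joined through the turns governed by $ \varepsilon = 1 $, and I expect the height to come out to $ p\tau(K) $ plus a contribution of $ \tfrac{1}{2}(p-1)(q-1) $ from the staggering.
\end{enumerate}

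The main obstacle is step (2): confirming that, when $ \varepsilon = 1 $, the $ p $ staggered copies join into a single essential component, and locating exactly where it crosses the axis. The worry is that pulling the curve tight after the shear could shift the crossing by one lattice unit. I would first check this on $ K = T_{2,3} $ and on the figure-eight knot ($ \varepsilon = 0 $) as sanity tests.

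As a consistency check, the formula should increase by exactly $ \tfrac{1}{2}(p-1) $ per unit of $ q $. This matches the genus of the band cobordism $ C_{p,q} \to C_{p,q+1} $ described in Section~\ref{sec:squeezed-patterns}, so Livingston's bound $ \tau(K_1) - \tau(K_0) \le g(\Sigma) $ is sharp for these cobordisms. Equivalently, the sharpness is exactly what Lemma~\ref{lemma:squeezing-tau} needs. I would use this to reduce to a single base case, say $ q = 1 $, and then do the curve computation only there.
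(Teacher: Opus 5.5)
The paper gives no proof of this statement; it is quoted from Hom, whose argument is a bordered Floer computation. Your mirroring reduction is correct. Your $\varepsilon(K)=0$ sketch is correct in outline: it is the case $P=C_{p,q}$ of Theorem~\ref{thm:tau-of-epsilon-zero}, so you may simply cite it. (The splitting $CFK^\infty(K)\simeq CFK^\infty(U)\oplus A$ splits $\widehat{CFD}$ of the complement, the $A$-part of the pairing is acyclic, and an acyclic filtered direct summand does not change $\tau$.) Using the Hanselman--Watson immersed-curve cabling construction for $\varepsilon(K)=1$ is a legitimate alternative to Hom's route. As written, however, step (3) only states the answer you expect. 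The $\varepsilon$-dependent term $-\tfrac12(p-1)\varepsilon(K)$ is exactly what that curve computation must produce, and nothing in the proposal produces it yet.

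The step that would actually fail is the reduction to the single base case $q=1$ in your last paragraph. The equality $\tau(K_{p,q'})-\tau(K_{p,q})=\tfrac12(p-1)(q'-q)$ is not an available input: Livingston's argument only gives $\le$, and equality is the content of the theorem. It cannot follow from formal properties, because it is false when $\varepsilon(K)=0$; for $K=U$ one has $\tau(T_{p,1})-\tau(T_{p,-1})=0<p-1$. Likewise, Lemma~\ref{lemma:squeezing-tau} takes this equality as a hypothesis, and the paper verifies that hypothesis using Hom's formula, so appealing to it here is circular.

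One computation honestly buys less than you want. Take $\varepsilon(K)=1$ and suppose you know $\tau(K_{p,q_0})=p\tau(K)+\tfrac12(p-1)(q_0-1)$.
\begin{itemize}
\item For $q>q_0$, the band cobordism $C_{p,q_0}\to C_{p,q}$ gives $\tau(K_{p,q})\le p\tau(K)+\tfrac12(p-1)(q-1)$. This matches the lower bound $\tau(K_{p,q})\ge p\tau(K)+\tfrac12(p-1)(q-1)$, which is the Hedden--Raoux bound from the introduction applied to the braid $(\sigma_1\cdots\sigma_{p-1})^q$. So $\tau(K_{p,q})$ is determined.
\item For $q<q_0$, the band cobordism only reproduces that same lower bound. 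So $\tau(K_{p,q})$ is merely confined to an interval of length $p-1$.
\end{itemize}
Hence no single base case suffices; you need base cases with $q\to-\infty$. This is why Hom computes $\tau(K_{p,pn+1})$ over an unbounded family of framings $n$, pairing the $(p,1)$-cable pattern with the $n$-framed complement, and then invokes Van Cott's monotonicity of $h(q)$. In your setting the fix is easier. The Hanselman--Watson construction applies uniformly to every $(p,q)$, so carry out the curve computation for general $q$ and drop the reduction altogether.
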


When $ \varepsilon(K) \neq 0 $, the cabling formula tells us that the difference in the values of $ \tau $ on $ K_{p, -q} $ and $ K_{p, q} $ is equal to $ (p - 1) q $, the genus of a squeezing cobordism from $ C_{p, -q} $ to $ C_{p, q} $. So, we are able to prove Theorem~\ref{thm:tau-of-squeezed-pattern-satellite} in this case using Lemma~\ref{lemma:squeezing-tau}. 
When $ \varepsilon(K) = 0 $, the $ \tau $-difference between the cables is too small for Lemma~\ref{lemma:squeezing-tau} to apply. Fortunately, the following theorem tells us that this case is not very interesting.

\begin{theorem}[\cite{hom-bordered-heegaard-floer-14}]
  \label{thm:tau-of-epsilon-zero}
  For a knot $ K $ with $ \varepsilon(K) = 0 $, and any pattern $ P $, 
  \[
    \tau(P(K)) = \tau(P(U)),
  \]
  where $ U $ is the unknot.
\end{theorem}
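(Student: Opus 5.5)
The plan is to follow Hom's bordered Floer argument. The first step shows that when $ \varepsilon(K) = 0 $ the bordered invariant of the knot complement contains a copy of the unknot complement's invariant as a direct summand. The second step shows that, after tensoring with the pattern, the rest contributes nothing to $ \tau $. Throughout, $ \tau $ is read off from the filtered chain homotopy type of $ \widehat{CFK}(P(K)) $. Specifically, it is the minimal Alexander filtration level at which the generator of $ \widehat{HF}(S^3) \cong \mathbb{F} $ is supported.

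\textbf{Step 1 (algebraic input).} I would use Hom's characterization of $ \varepsilon(K) = 0 $: the full complex $ CFK^\infty(K) $ is filtered chain homotopy equivalent to $ CFK^\infty(U) \oplus A $. Here $ CFK^\infty(U) = \mathbb{F}[U, U^{-1}] $ is generated by a single element $ x_0 $ in bifiltration $ (0,0) $, and $ A $ is acyclic. I would prove this from the definition of $ \varepsilon $ via the filtrations on the horizontal and vertical complexes. When $ \varepsilon = 0 $, one can choose a horizontally and vertically simplified basis in which a single basis element generates homology in both directions. That element then has no incoming or outgoing horizontal or vertical arrows, and Alexander grading $ \tau(K) = 0 $. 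It therefore splits off as a direct summand.

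\textbf{Step 2 (bordered splitting).} Next I would apply the Lipshitz--Ozsv\'ath--Thurston algorithm that computes $ \widehat{CFD}(X_K) $, with any fixed framing, from a horizontally and vertically simplified basis of $ CFK^-(K) $. The isolated generator $ x_0 $ contributes the unstable chain for the given framing. The remaining basis elements contribute horizontal and vertical chains that pair among themselves. Consequently $ \widehat{CFD}(X_K) \simeq \widehat{CFD}(X_U) \oplus D' $, where the unknot summand carries the same framing. Pairing with the bordered invariant $ CFA^-(S^1 \times D^2, P) $ of the pattern then gives a filtered splitting
\[
  \widehat{CFK}(P(K)) \simeq \widehat{CFK}(P(U)) \oplus \bigl(CFA^-(S^1\times D^2,P) \boxtimes D'\bigr),
\]
and the Alexander gradings agree on the first summand.

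\textbf{Step 3 (the obstacle).} The step I expect to be hardest is showing that the second summand is acyclic once the Alexander filtration is forgotten. Granting this, the generator of $ \widehat{HF}(S^3) $ lies entirely in the $ P(U) $ summand, and hence $ \tau(P(K)) = \tau(P(U)) $. I would first try a rank count. Forgetting the filtration replaces the pattern by the plain solid torus, and both pairings then compute $ \widehat{HF}(S^3) \cong \mathbb{F} $. The $ \widehat{CFD}(X_U) $ summand already contributes one copy of $ \mathbb{F} $, so the homology of the complementary summand must vanish. If this bookkeeping proves delicate, the fallback is the immersed curve picture. There $ \varepsilon(K) = 0 $ says exactly that the essential component of the curve for $ X_K $ is the horizontal line, just as for $ U $. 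The closed components cannot carry the nontrivial class of $ \widehat{HF}(S^3) $, so $ \tau $ of the satellite is computed from the same pairing as for $ P(U) $.
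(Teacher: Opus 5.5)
The paper does not prove this statement; it quotes it from Hom. Your outline is Hom's bordered argument: use $\varepsilon(K)=0$ to split $\widehat{CFD}(X_K)\simeq\widehat{CFD}(X_U)\oplus D'$, pair with the pattern, and show the complementary summand does not affect $\tau$.

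Your Step~3 rank count is the right way to finish. Once the $z$ basepoint is forgotten, both $\widehat{CFD}(X_K)$ and its $X_U$-summand pair with the solid torus to give $\widehat{HF}(S^3)\cong\mathbb{F}$, so the other summand is acyclic. For a filtered direct sum with an acyclic summand, $\tau$ is computed from the remaining summand. One small fix: pair consistently, either with $\widehat{CFA}$ carrying the $z$-filtration, or with $CFA^-$ and read $\tau$ off the $U$-nontorsion part of $HFK^-$; your display mixes the two.

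The real gap is in Step~1. The inference ``$x_0$ has no horizontal or vertical arrows in or out, hence it splits off'' ignores diagonal arrows, and it fails. Here is a counterexample.
\begin{itemize}
\item Take $x$ at filtration $(0,0)$ in Maslov grading $0$.
\item Take a $2\times 2$ box $v,z,w,y$ at $(2,2),(0,2),(2,0),(0,0)$, in Maslov gradings $3,2,2,1$, with $\partial v=z+w$ and $\partial z=\partial w=y$.
\item Add the diagonal arrow $\partial x=Uy$.
\end{itemize}
This complex is symmetric under $i\leftrightarrow j$. It has a basis that is simultaneously horizontally and vertically simplified, with $x$ distinguished in both directions, so $\tau=\varepsilon=0$. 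However, the only Maslov-grading-$0$ elements of filtration level $\le(0,0)$ are multiples of $x$, and $x$ is not a cycle. So there is no filtered quasi-isomorphism $CFK^\infty(U)\to C$, and $CFK^\infty(U)$ is not a summand. Thus $\varepsilon=0$ alone does not yield the splitting of the full complex claimed in your Step~1.

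You do not actually need that splitting. The Lipshitz--Ozsv\'ath--Thurston model of $\widehat{CFD}(X_K)$ sees only the vertical and horizontal arrows and the change of basis between a vertically and a horizontally simplified basis, i.e.\ the complex with diagonal arrows discarded. What Step~2 requires is a splitting at that level: vertically and horizontally simplified bases that share $x_0$ as distinguished element and whose remaining elements span the same complement of $x_0$. This is the input Hom's argument extracts from the definition of $\varepsilon$ (equivalently, the essential immersed curve is the horizontal line). It is exactly what your Step~1 asserts without proof.

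Replace Step~1 by this statement, with its justification, and add a remark that the $X_U$-summand is graded exactly like $\widehat{CFD}(X_U)$ because $x_0$ has the same bigrading $(0,0)$ as the unknot's generator. With those changes your argument is Hom's.
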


So, a complete $ \tau $ formula for any squeezed pattern can be given by calculating $ \tau(P(U)) $ using some other method.

\begin{proof}[Proof of Theorem~\ref{thm:tau-of-squeezed-pattern-satellite}]
Suppose the pattern $ P $ is squeezed by the pair of cobordisms $ \Sigma^-\colon C_{p, -q} \to P $ and $ \Sigma^+\colon P \to C_{p, q} $. For a knot $ K $, the cobordisms of patterns induce cobordisms $ \Sigma^-_K\colon K_{p, -q} \to P(K) $ and $ \Sigma^+_K\colon P(K) \to K_{p, q} $, now both in $ S^3 \times [0, 1] $, such that $ g(\Sigma^\pm_K) = g(\Sigma^\pm) $. If $ \varepsilon(K) \neq 0 $, Theorem~\ref{thm:tau-of-cable}, the cabling formula, tells us that
\[
  \tau(K_{p, q}) - \tau(K_{p, -q}) = (p - 1)q = g(\Sigma^-_K) + g(\Sigma^+_K).
\]
Therefore, the hypotheses for Lemma~\ref{lemma:squeezing-tau} hold, and we have
\[
  \tau(P(K)) = \tau(K_{p, -q}) + g(\Sigma^-) = \tau(K_{p, q}) - g(\Sigma^+),
\]
as desired. The $ \varepsilon(K) = 0 $ case follows immediately from Theorem~\ref{thm:tau-of-epsilon-zero}.
\end{proof}

\section{Braided patterns}

\subsection{The \texorpdfstring{$ \tau $}{τ} invariant of braided satellites}

Consider the cobordism from $ C_{p, -q} $ to $ C_{p, q} $ in the solid torus given by first attaching a band that resolves each negative crossing, and then attaching another band that adds a positive crossing in its place. This cobordism consists of $ 2(p - 1)q $ band moves, and thus has genus $ (p - 1)q $. The fact that these bands can be attached in any order allows us to achieve any braid as a slice of a cobordism constructed in this way.

\begin{proof}[Proof of Theorem~\ref{thm:tau-of-braided-satellites}]
For an index-$ p $ braid $ \beta $, let $ k^+ $ be the number of positive crossings, and let $ k^- $ be the number of negative crossings. Also, let $ \beta^+ $ be the braid given by making all of the negative crossings in $ \beta $ positive, and $ \beta^- $ be the braid given by making all of the positive crossings in $ \beta $ negative. Pick an integer $ q $ that is coprime to $ p $ and large enough that a braid word for $ \beta^+ $ appears as a subsequence of the braid word $ (\sigma_1 \sigma_2 \cdots \sigma_{p - 1})^q $. Then, a braid word for $ \beta^- $ appears as a subsequence of $ (\sigma_1^{-1} \sigma_2^{-1} \cdots \sigma_{p - 1}^{-1})^q $ as well. We construct squeezing cobordisms as depicted in Figure~\ref{fig:braid-squeezing-cobordisms}.

\begin{figure}[b]
  \centering
  \labellist
  \footnotesize
  \pinlabel {$ C_{p, -q} $} at \topt{8.75mm} \topt{5mm}
  \pinlabel {$ \beta^- $} at \topt{35.875mm} \topt{5mm}
  \pinlabel {$ \beta $} at \topt{63mm} \topt{5mm}
  \pinlabel {$ \beta^+ $} at \topt{90.125mm} \topt{5mm}
  \pinlabel {$ C_{p, q} $} at \topt{117.25mm} \topt{5mm}
  \pinlabel \parbox{10em}{\centering resolve \\ all \redcrossing} [b] at \topt{22.312mm} \topt{26mm}
  \pinlabel \parbox{10em}{\centering change \\ \bluecrossing} [b] at \topt{49.437mm} \topt{26mm}
  \pinlabel \parbox{10em}{\centering change \\ all \bluecrossing} [b] at \topt{76.562mm} \topt{26mm}
  \pinlabel \parbox{10em}{\centering finish \\ adding \\ bands} [b] at \topt{103.687mm} \topt{26mm}
  \endlabellist
  \includegraphics{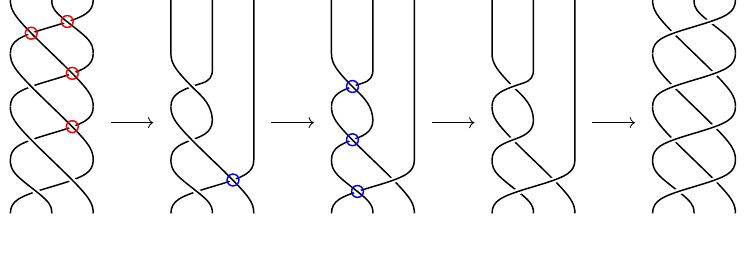}
  \caption{The construction of a genus-$ (p - 1)q $ cobordism from $ C_{p, -q} $ to $ C_{p, q} $ of which an arbitrary braid is a slice.}
  \label{fig:braid-squeezing-cobordisms}
\end{figure}

Construct the cobordism $ \Sigma^-\colon C_{p, -q} \to P_\beta $ with the following braid operations. The pattern $ C_{p, -q} $ corresponds to the braid $ (\sigma_1^{-1} \sigma_2^{-1} \cdots \sigma_{p - 1}^{-1})^q $. Delete braid generators until only the braid $ \beta^- $ remains. This involves deleting $ (p - 1) q - (k^+ + k^-) $ generators. Then, change $ k^+ $ crossings to turn $ \beta^- $ to $ \beta $. The resulting cobordism $ \Sigma^- $ has genus equal to half the number of band moves, so
\[
    g(\Sigma^-) = \frac{1}{2} \left( (p - 1) q + (k^+ - k^-) \right).
\]

Similarly, construct $ \Sigma^+\colon P_\beta \to C_{p, q} $ with the following braid operations. Turn $ \beta $ into $ \beta^+ $ with $ k^- $ crossing changes. Then, add $ (p - 1) q - (k^+ + k^-) $ generators to turn $ \beta^+ $ to $ (\sigma_1 \sigma_2 \cdots \sigma_{p - 1})^q $. So, 
\[
    g(\Sigma^+) = \frac{1}{2} \left( (p - 1) q - (k^+ - k^-) \right),
\]
and $ g(\Sigma^-) + g(\Sigma^+) = (p - 1) q $. Therefore, $ P_\beta $ is squeezed.

Note the writhe of the braid $ \beta $ is $ k^+ - k^- $, so $ g(\Sigma^-) = \frac{1}{2}\left( (p - 1) q + w(\beta) \right) $. Thus, using the $ \tau $ formulas in Theorem~\ref{thm:tau-of-squeezed-pattern-satellite} and Theorem~\ref{thm:tau-of-cable}, for $ K $ with $ \varepsilon(K) \neq 0 $, we have 
\[
\begin{aligned}
  \tau(P_\beta(K)) &= \tau(K_{p, -q}) + g(\Sigma^-) \\
  &= p\tau(K) + \frac{1}{2}(p - 1)(-q - \varepsilon(K)) + \frac{1}{2}\left( (p - 1) q + w(\beta) \right) \\
  &= p\tau(K) - \frac{1}{2}(p - 1)\varepsilon(K) + \frac{1}{2} w(\beta),
\end{aligned}
\]
as desired. The $ \varepsilon(K) = 0 $ case again follows immediately from Theorem~\ref{thm:tau-of-epsilon-zero}.
\end{proof}

\begin{remark}
For a knot $ K $ with $ \varepsilon(K) = 0 $, one may still get an upper and lower bound on $ \tau(P_\beta(K)) $ using the cobordisms $ \Sigma^\pm $ defined in the preceding proof. These bounds are 
\[
  -\frac{1}{2}(p - 1) + \frac{1}{2} w(\beta)
  \leq \tau(P_\beta(K))
  \leq \frac{1}{2}(p - 1) + \frac{1}{2} w(\beta).
\]
If $ \varepsilon(K) = 0 $, then we must have $ \tau(K) = 0 $ \cite[Proposition 3.6]{hom-bordered-heegaard-floer-14}, so these  are exactly the bounds on $ \tau(P_\beta(K)) $ given by Hedden and Raoux in \cite{hedden-raoux-4dimensional-rational-genus-23-preprint}.
\end{remark}

Theorem \ref{thm:tau-of-braided-satellites} can also be interpreted in terms of work of Van Cott~\cite{vancott-ozsvathszabo-rasmussen-invariants-10}. First, we describe a similar reinterpretation of the cabling formula in Theorem~\ref{thm:tau-of-cable}. Van Cott defines the function $ h(q) $ for a fixed winding number $ p $, knot $ K $, and slice-torus invariant $ \nu $ as 
\[
  h(q) = \nu(K_{p, q}) - \frac{1}{2}(p - 1) q,
\]
where $ q $ is an integer coprime to $ p $. Theorem 2 of~\cite{vancott-ozsvathszabo-rasmussen-invariants-10} says that for $ q > q' $, we have
\[
  -(p - 1) \leq h(q) - h(q') \leq 0.
\]
In particular, $ h $ is non-increasing. Hom's cabling formula---which itself relies on this work of Van Cott---pins down $ h $ exactly when the slice-torus invariant is $ \tau $.

Van Cott has a similar result for braided patterns. Fix an index-$ p $ braid $ \beta $, a knot $ K $, and a slice-torus invariant $ \nu $. Let $ \beta_r $ be the braid $ \beta $ with $ r $ full twists appended (in terms of braid generators, $ \beta_r = \beta (\sigma_1 \cdots \sigma_{p - 1})^{pr} $). Then define 
\[
  g(r) = \nu(P_{\beta_r}(K)) - \frac{1}{2}(p - 1) pr.
\]
Theorem 9 of~\cite{vancott-ozsvathszabo-rasmussen-invariants-10} says that for $ r > r' $, we have 
\[
  -(p - 1) \leq g(r) - g(r') \leq 0.
\]
For a fixed $ K $ with $ \varepsilon(K) \neq 0 $ and $ p $, and with $ \tau $ as our slice-torus invariant, let $ g_\tau $ be the corresponding $ g $ function. Then, our Theorem~\ref{thm:tau-of-braided-satellites} is equivalent to: 
\[
  g_\tau(r) = \begin{cases}
    p\tau(K) - \frac{1}{2}(p - 1) + \frac{1}{2} w(\beta) & \text{if } \varepsilon(K) = 1 \\
    p\tau(K) + \frac{1}{2}(p - 1) + \frac{1}{2} w(\beta) & \text{if } \varepsilon(K) = -1
  \end{cases}
\]
Our methods do not work in the $ \varepsilon(K) = 0 $ case, as by Theorem~\ref{thm:tau-of-epsilon-zero}, this is equivalent to finding $ \tau $ of the closure of an arbitrary braid.

\subsection{The \texorpdfstring{$ \varepsilon $}{ε} invariant of braided satellites}

Armed with a $ \tau $ formula for a pattern $ P $, if one can calculate the $ \tau $ invariant for cables of $ P(K) $, then one can check which case of the cabling formula agrees with that $ \tau $ value and deduce $ \varepsilon(P(K)) $. This technique is used in~\cite{hom-bordered-heegaard-floer-14} to prove the following theorem.

\begin{theorem}[{\cite[Theorem 2]{hom-bordered-heegaard-floer-14}}]
  \label{thm:epsilon-of-cable}
  If $ \varepsilon(K) \neq 0 $, then $ \varepsilon(K_{p, q}) = \varepsilon(K) $.
\end{theorem}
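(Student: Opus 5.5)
The plan is to recover this from Theorem~\ref{thm:tau-of-braided-satellites}, using the technique just described. We compute $\tau$ of a cable of $K_{p,q}$ in two ways and see which case of the cabling formula (Theorem~\ref{thm:tau-of-cable}) is consistent. The key observation is that for $r \geq 2$ and $s$ coprime to $r$, the iterated cable $(K_{p,q})_{r,s}$ is the satellite $P_\beta(K)$. Here $\beta$ is an index-$pr$ braid obtained by taking $r$ parallel copies of the braid $(\sigma_1\cdots\sigma_{p-1})^{q}$ and inserting an $r$-strand braid that realizes the $(r,s)$ cable twisting. Its closure is a knot.

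The first step is to compute the writhe $w(\beta)$, keeping careful track of framings. Each of the $(p-1)|q|$ crossings of the $(p,q)$ cable braid becomes $r^2$ crossings of the same sign. The $r$-parallel of a closed braid, however, uses the blackboard framing, which differs from the Seifert framing of $K_{p,q}$ by $(p-1)q$. To obtain the Seifert-framed $(r,s)$ cable, the inserted $r$-strand braid must therefore have exponent sum $(r-1)\bigl(s - r(p-1)q\bigr)$. Adding these contributions should give
\[
  w(\beta) = r^2(p-1)q + (r-1)\bigl(s - r(p-1)q\bigr) = r(p-1)q + (r-1)s.
\]
I expect this framing bookkeeping to be the main obstacle. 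I would sanity-check the formula against the case $K = U$, where both sides reduce to known torus-knot and iterated-torus-knot values.

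Next, Theorem~\ref{thm:tau-of-braided-satellites} with $\varepsilon(K) \neq 0$ gives
\[
  \tau((K_{p,q})_{r,s}) = pr\,\tau(K) - \tfrac{1}{2}(pr-1)\varepsilon(K) + \tfrac{1}{2}\bigl(r(p-1)q + (r-1)s\bigr).
\]
On the other side, Theorem~\ref{thm:tau-of-cable} gives $\tau(K_{p,q}) = p\tau(K) + \frac{1}{2}(p-1)(q - \varepsilon(K))$. Let $e = \varepsilon(K_{p,q})$. If $e \neq 0$, the cabling formula gives $\tau((K_{p,q})_{r,s}) = r\tau(K_{p,q}) + \frac{1}{2}(r-1)(s - e)$. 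Expanding, the case $e = \varepsilon(K)$ reproduces the displayed value exactly. The case $e = -\varepsilon(K)$ differs from it by $(r-1)\varepsilon(K) \neq 0$ since $r \geq 2$, so that case is ruled out.

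Finally, we rule out $e = 0$. In that case the cabling formula gives $\tau((K_{p,q})_{r,s}) = \tau(T_{r,s})$ for every $s$ coprime to $r$. Take $s$ large and positive, so that $\tau(T_{r,s}) = \frac{1}{2}(r-1)(s-1)$. Compare this with the value from Theorem~\ref{thm:tau-of-braided-satellites}, whose dependence on $s$ is also $\frac{1}{2}(r-1)s$. Equality for all such $s$ then forces
\[
  r\tau(K_{p,q}) - \tfrac{1}{2}(r-1)\varepsilon(K) = -\tfrac{1}{2}(r-1).
\]
This should conflict with $\tau(K_{p,q}) = 0$, which Hom's result~\cite[Proposition 3.6]{hom-bordered-heegaard-floer-14} gives whenever $\varepsilon(K_{p,q}) = 0$. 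If needed, one can additionally vary $r$ (say $r = 2, 3$) to get an outright contradiction. Hence $e = \varepsilon(K)$, as claimed.
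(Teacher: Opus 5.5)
Your identification of $(K_{p,q})_{r,s}$ as a braided satellite of $K$ is correct. So are the writhe $w(\beta) = r(p-1)q + (r-1)s$ and your elimination of $e = -\varepsilon(K)$. This is the same comparison the paper uses to prove Theorem~\ref{thm:epsilon-of-braided-satellites}, and the statement is its special case $\beta = (\sigma_1 \cdots \sigma_{p-1})^q$. The gap is in ruling out $e = 0$. With $\tau(K_{p,q}) = 0$, your forced identity $r\tau(K_{p,q}) - \frac{1}{2}(r-1)\varepsilon(K) = -\frac{1}{2}(r-1)$ just says $\varepsilon(K) = 1$. So it gives a contradiction only when $\varepsilon(K) = -1$. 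Now suppose $\varepsilon(K) = 1$ and $\tau(K_{p,q}) = 0$, for instance $q = 1$ and $\tau(K) = 0$. Then the braided-satellite value is $r\tau(K_{p,q}) + \frac{1}{2}(r-1)(s-1) = \frac{1}{2}(r-1)(s-1) = \tau(T_{r,s})$ for every $r \geq 2$ and every $s > 0$. The two computations agree identically, so varying $r$ cannot help.

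The missing ingredient is negative $s$. For $s < 0$ one has $\tau(T_{r,s}) = \frac{1}{2}(r-1)(s+1)$, and the same comparison forces $r\tau(K_{p,q}) - \frac{1}{2}(r-1)\varepsilon(K) = +\frac{1}{2}(r-1)$. Together with the positive-$s$ identity this gives $r - 1 = 0$, a contradiction, and you no longer need Hom's Proposition~3.6. The reason it works is that $s \mapsto \tau(T_{r,s})$ jumps by $r-1$ across $s = 0$, while the braided-satellite formula is affine in $s$. This is how the paper handles the $\varepsilon = 0$ case in the proof of Theorem~\ref{thm:epsilon-of-braided-satellites}, taking $r = 2$ and $s = \pm 1$. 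There the cabling formula would give $\tau(T_{2,\pm 1}) = 0$ for both cables, whereas the braided formula makes them differ by $1$. With that change your argument is complete.
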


Likewise, we prove Theorem~\ref{thm:epsilon-of-braided-satellites} by calculating $ \tau((P_\beta(K))_{2, q}) $ two ways. First, we recall the definition of a composite pattern. Given two patterns $ P, Q \colon S^1 \to S^1 \times D^2 $, let $ i_Q \colon S^1 \times D^2 \to S^1 \times D^2 $ be an embedding of a Seifert-framed tubular neighborhood of $ Q $. Then, their composite pattern, denoted $ P \circ Q $, is the pattern given by the composition $ i_Q \circ P $. It is clear from the definition that $ (P \circ Q)(K) $ is isotopic to $ P(Q(K)) $.

Also, as before, our argument breaks down in the $ \varepsilon(K) = 0 $ case. We can again appeal to a theorem of Hom to fill in this case.

\begin{theorem}[\cite{hom-bordered-heegaard-floer-14}]
  \label{thm:epsilon-of-epsilon-zero}
  For a knot $ K $ with $ \varepsilon(K) = 0 $, and any pattern $ P $, 
  \[
    \varepsilon(P(K)) = \varepsilon(P(U)).
  \]
\end{theorem}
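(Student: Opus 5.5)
The plan is to derive this formally from Theorem~\ref{thm:tau-of-epsilon-zero}, applied to composite patterns, together with the cabling formula Theorem~\ref{thm:tau-of-cable}. The idea is that $\varepsilon$ of a knot $J$ is determined by the three numbers $\tau(J)$, $\tau(J_{2,-1})$ and $\tau(J_{2,3})$. Each of these is a $\tau$ value of a satellite of $K$, and Theorem~\ref{thm:tau-of-epsilon-zero} says such values do not change when $K$ is replaced by $U$.

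The first step is to check that the triple determines $\varepsilon(J)$. Specializing Theorem~\ref{thm:tau-of-cable} to $p = 2$ gives the following.
\begin{itemize}
  \item If $\varepsilon(J) = 1$, then $\tau(J_{2,-1}) = 2\tau(J) - 1$ and $\tau(J_{2,3}) = 2\tau(J) + 1$.
  \item If $\varepsilon(J) = -1$, then $\tau(J_{2,-1}) = 2\tau(J)$ and $\tau(J_{2,3}) = 2\tau(J) + 2$.
  \item If $\varepsilon(J) = 0$, then $\tau(J_{2,-1}) = \tau(T_{2,-1}) = 0$ and $\tau(J_{2,3}) = \tau(T_{2,3}) = 1$.
\end{itemize}
Hence the difference $\tau(J_{2,3}) - \tau(J_{2,-1})$ equals $2$ when $\varepsilon(J) \neq 0$ and $1$ when $\varepsilon(J) = 0$. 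When $\varepsilon(J) \neq 0$, the quantity $\tau(J_{2,-1}) - 2\tau(J)$ equals $-1$ or $0$, which detects the sign. So $\varepsilon(J)$ is a function of $\bigl(\tau(J), \tau(J_{2,-1}), \tau(J_{2,3})\bigr)$.

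The second step applies this to $J = P(K)$ and $J' = P(U)$, where $\varepsilon(K) = 0$.
\begin{itemize}
  \item Theorem~\ref{thm:tau-of-epsilon-zero} gives $\tau(P(K)) = \tau(P(U))$ directly.
  \item For $q \in \{-1, 3\}$, the composite pattern $C_{2,q} \circ P$ satisfies $(C_{2,q} \circ P)(K) = (P(K))_{2,q}$, as noted above.
  \item Applying Theorem~\ref{thm:tau-of-epsilon-zero} to the pattern $C_{2,q} \circ P$ therefore gives $\tau((P(K))_{2,q}) = \tau((P(U))_{2,q})$.
\end{itemize}
Thus the two triples coincide, and by the first step $\varepsilon(P(K)) = \varepsilon(P(U))$.

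The only delicate point is that Theorem~\ref{thm:tau-of-epsilon-zero} must hold for \emph{arbitrary} patterns, since $C_{2,q} \circ P$ is in general neither braided nor squeezed. The statement as quoted covers any pattern, so this should cause no trouble. Beyond that, the work is a careful case check that the three branches of the $p = 2$ cabling formula are pairwise distinguishable; in particular, the $\varepsilon = 0$ branch must not be confused with $\varepsilon = \pm 1$ when $\tau(J) = 0$. That is why the two cables $q = -1$ and $q = 3$ are used together rather than a single one.

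If one instead wanted a proof independent of Theorem~\ref{thm:tau-of-epsilon-zero}, I would go back to Hom's characterization of $\varepsilon(K) = 0$: $CFK^\infty(K) \simeq CFK^\infty(U) \oplus A$ with $A$ acyclic. One would then show via bordered pairing, or via immersed curves where the essential component matches the unknot's, that the summand coming from $A$ does not affect the relevant invariants of the satellite. That route is substantially harder, and I would avoid it.
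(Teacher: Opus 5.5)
The paper gives no proof of this statement. It is quoted from Hom, alongside Theorem~\ref{thm:tau-of-epsilon-zero}, to handle the $\varepsilon(K)=0$ case of Theorem~\ref{thm:epsilon-of-braided-satellites}.

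Your argument is a correct derivation of it from Theorem~\ref{thm:tau-of-epsilon-zero} and the cabling formula. The case check is right. By Theorem~\ref{thm:tau-of-cable} with $p=2$, the difference $\tau(J_{2,3})-\tau(J_{2,-1})$ is $2$ when $\varepsilon(J)\neq 0$ and $1$ when $\varepsilon(J)=0$. When $\varepsilon(J)\neq 0$, the value $\tau(J_{2,-1})-2\tau(J)\in\{-1,0\}$ then reads off the sign. So the triple determines $\varepsilon(J)$, and you never need $\tau(J)=0$. The identification $(C_{2,q}\circ P)(K)=(P(K))_{2,q}$ is also fine: the untwisted satellite operation preserves linking numbers, so the Seifert framing of $P$ in the solid torus is carried to the Seifert framing of $P(K)$.

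Your route is the same technique the paper uses to prove Theorem~\ref{thm:epsilon-of-braided-satellites}: compute $\tau$ of $(2,q)$-cables of the satellite in two ways via the composite pattern $C_{2,q}\circ P$. Here Theorem~\ref{thm:tau-of-epsilon-zero}, applied to the arbitrary pattern $C_{2,q}\circ P$, plays the role that Theorem~\ref{thm:tau-of-braided-satellites} plays there. The paper uses $q=\pm1$, which would work equally well for you. In that case the $\varepsilon=0$ branch gives $\tau(J_{2,1})=\tau(J_{2,-1})$, while both nonzero branches give a difference of $1$.

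What your route buys is that the paper's second appeal to Hom becomes redundant, since the $\varepsilon$ statement follows formally from the $\tau$ statement for all patterns. It is a reduction, not an independent proof. All the Floer-theoretic content now sits in Theorem~\ref{thm:tau-of-epsilon-zero} and the cabling formula, and the argument is non-circular only because those are established without using the $\varepsilon$ statement.
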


\begin{proof}[Proof of Theorem~\ref{thm:epsilon-of-braided-satellites}]
Suppose $ \varepsilon(K) \neq 0 $ and $ \beta $ is an index-$ p $ braid. Then, we can find $ \tau $ of the $ (2, q) $-cable of $ P_\beta(K) $ by applying Theorem~\ref{thm:tau-of-cable}:
\begin{equation}
  \tau((P_\beta(K))_{2, q}) = \begin{cases}
    2 \tau(P_\beta(K)) + \frac{1}{2}(q - \varepsilon(P_\beta(K))) & \text{if } \varepsilon(P_\beta(K)) \neq 0 \\
    \tau(T_{2, q}) & \text{if } \varepsilon(P_\beta(K)) = 0.
  \end{cases}
  \label{eq:composite-formula-1}
\end{equation}

\begin{figure}[b]
  \centering
  \labellist
  \small
  \pinlabel \parbox{10em}{\centering {\LARGE $ \beta $} \\ blackboard framed} at \topt{20mm} \topt{41.75mm}
  \pinlabel \parbox{10em}{\centering $ -w(\beta) $ full \\ twists} at \topt{9mm} \topt{25mm}
  \pinlabel \parbox{10em}{\centering {\LARGE $ \beta $} \\ doubled, \\ blackboard framed} at \topt{70mm} \topt{41.75mm}
  \pinlabel \parbox{10em}{\centering $ -w(\beta) $ full \\ twists} at \topt{59mm} \topt{25mm}
  \pinlabel \parbox{10em}{\centering $ q $ half \\ twists} at \topt{59mm} \topt{10mm}
  \endlabellist
  \includegraphics{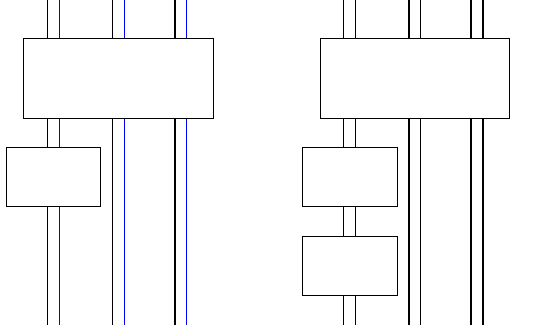}
  \caption{Left: a braid $ \beta $ with a Seifert-framed longitude marked in blue. Right: the braid $ \beta_{2, q} $ representing the $ (2, q) $-cable of the pattern $ P_\beta $.}
  \label{fig:cable-of-braid}
\end{figure}

Now, consider the composite pattern $ C_{2, q} \circ P_\beta $. This composite pattern is also braided, given by the braid $ \beta_{2, q} $ depicted in Figure~\ref{fig:cable-of-braid}, so $ (P_\beta(K))_{2, q} = P_{\beta_{2, q}}(K) $. Note that the braid $ \beta_{2, q} $ has index $ 2p $. Note also that each crossing in $ \beta $ becomes four crossings of the same sign in $ \beta_{2, q} $, so, together with the $ -w(\beta) $ full twists in the framing to account for the writhe and $ q $ half twists in the cable pattern, the total writhe of $ \beta_{2, q} $ is $ 2w(\beta) + q $. So, we can apply Theorem~\ref{thm:tau-of-braided-satellites} directly, to get the following:
\begin{equation}
  \tau(P_{\beta_{2, q}}(K)) = 2p \tau(K) - \frac{1}{2}(2p - 1)\varepsilon(K) + \frac{1}{2}(2 w(\beta) + q)
  \label{eq:composite-formula-2}
\end{equation}

These formulas tell us that $ \varepsilon(P_\beta(K)) $ cannot be $ 0 $. If it were, \eqref{eq:composite-formula-1} would say that $ \tau((P_\beta(K))_{2, -1}) $ and $ \tau((P_\beta(K))_{2, 1}) $ are equal, as $ T_{2, -1} $ and $ T_{2, 1} $ are both unknots, and \eqref{eq:composite-formula-2} would say they are not. Thus, the right-hand side of \eqref{eq:composite-formula-2} is equal to the $ \varepsilon(P_\beta(K)) \neq 0 $ case of \eqref{eq:composite-formula-1}, giving
\[
  2 \tau(P_\beta(K)) + \frac{1}{2}(q - \varepsilon(P_\beta(K)))
  = 2p \tau(K) - \frac{1}{2}(2p - 1)\varepsilon(K) + \frac{1}{2}(2 w(\beta) + q).
\]
After rewriting $ \tau(P_\beta(K)) $ using Theorem~\ref{thm:tau-of-braided-satellites}, all of the terms involving $ p $, $ q $, $ w(\beta) $, and $ \tau(K) $ cancel, leaving $ \varepsilon(P_\beta(K)) = \varepsilon(K) $, as desired. The $ \varepsilon(K) = 0 $ case follows immediately from Theorem~\ref{thm:epsilon-of-epsilon-zero}.
\end{proof}

\section{Homomorphism obstructions}

We use the following obstruction in terms of the $ \tau $ invariant, which says that if a pattern induces a homomorphism on $ \mathcal{C} $, its $ \tau $ formula cannot depend on $ \varepsilon(K) $.

\begin{proposition}[{\cite[Proposition 5.4]{miller-homomorphism-obstructions-satellite-23}}]
  \label{prop:tau-of-homom-is-mult-by-winding-number}
  If a pattern $ P $ induces a homomorphism on the concordance group, and $ p \geq 0 $ is the winding number of $ P $, then 
  \[
      \tau(P(K)) = p\tau(K).
  \]
\end{proposition}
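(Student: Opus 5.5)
We prove Proposition~\ref{prop:tau-of-homom-is-mult-by-winding-number} by showing that $\phi(K) = \tau(P(K)) - p\tau(K)$ is a group homomorphism $\mathcal{C} \to \mathbb{Z}$ that is uniformly bounded, and hence identically zero.

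The first step is to establish additivity. Since $[P]$ is a homomorphism, $P(K \cs J)$ is concordant to $P(K) \cs P(J)$. Additivity and concordance invariance of $\tau$ (Theorem~\ref{thm:tau-is-slice-torus}) then give $\tau(P(K\cs J)) = \tau(P(K)) + \tau(P(J))$. Because $\tau(K\cs J) = \tau(K)+\tau(J)$ as well, $\phi$ is additive on $\mathcal{C}$. We also note that $P(U)$ must be slice, since a homomorphism sends the identity to the identity; this is consistent with $\phi(U) = 0$.

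The second step is to show that $\phi$ is bounded independently of $K$. Any pattern $P$ of winding number $p$ can be turned into the cable $C_{p,q}$, for some $q$, by a finite sequence of crossing changes in the solid torus. Each crossing change is realized by a cobordism in $(S^1 \times D^2) \times [0,1]$ of genus at most one, namely two band moves, so we obtain a cobordism $P \to C_{p,q}$ of some genus $g$ that depends only on $P$. Satelliting gives a genus-$g$ cobordism $P(K) \to K_{p,q}$ for every $K$. Livingston's argument from Section~\ref{sec:squeezed-patterns} then yields
\[
  |\tau(P(K)) - \tau(K_{p,q})| \le g .
\]
By Theorem~\ref{thm:tau-of-cable}, $\tau(K_{p,q}) - p\tau(K)$ takes only finitely many values as $K$ varies, all bounded by a constant depending on $p$ and $q$. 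If one prefers not to rely on the cabling formula, Van Cott-type bounds or a direct comparison of $K_{p,q}$ with $p$ parallel copies of $K$ would serve the same purpose. Together these show $|\phi(K)| \le C$ for a constant $C$ independent of $K$.

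The conclusion follows from combining the two steps. For any $K$, additivity gives $\phi(\#^n K) = n\,\phi(K)$, and boundedness forces $|n\,\phi(K)| \le C$ for all $n$, so $\phi(K)=0$. Hence $\tau(P(K)) = p\tau(K)$.

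The main obstacle is the uniform bound in the second step. It needs care in two places: producing the cobordism from $P$ to a cable within the solid torus while keeping the winding number fixed, and controlling $\tau$ of cables uniformly in $K$. The first is handled by the crossing-change argument above. The second is supplied by the cabling formula of Theorem~\ref{thm:tau-of-cable}.
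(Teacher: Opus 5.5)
Your argument is correct and is essentially the standard proof from Miller's paper, which this paper cites rather than reproves. In that proof, $K \mapsto \tau(P(K)) - p\tau(K)$ is additive because $[P]$ is a homomorphism, it is uniformly bounded by turning $P$ into a cable through crossing changes in the solid torus (possible because knots there with equal winding number are homotopic) and then applying the cabling formula, and a bounded additive function must vanish. Two small details are worth making explicit: the finiteness of the values of $\tau(K_{p,q}) - p\tau(K)$ in the $\varepsilon(K)=0$ case uses the fact that $\varepsilon(K)=0$ forces $\tau(K)=0$ (Hom's Proposition~3.6), and you should take $q$ coprime to $p$ (say $q=1$), using the trivial pattern in place of a cable when $p=0$.
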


A squeezed pattern has a $ \tau $ formula which differs from that of a cable pattern by some constant, and this does not look like the above $ \tau $ formula for winding number at least 2. This is made formal in the following proposition, from which Corollary~\ref{cor:squeezed-not-homomorphism} follows quickly.

\begin{proposition}
  \label{prop:squeezed-far-from-homomorphism}
  Suppose $ P $ is a pattern for which there exists $ p \geq 2 $, $ q $ coprime to $ p $, and a constant $ c $ such that for $ K $ with $ \varepsilon(K) \neq 0 $, 
  \[
      \tau(P(K)) = \tau(K_{p, q}) + c.
  \]
  Suppose also that $ Q $ is a pattern that induces a homomorphism on concordance. If $ \Sigma $ is a cobordism between them, then 
  \[
      g(\Sigma) \geq \left| \frac{1}{2}(p - 1)q + c \hspace{1pt}\right| + \frac{1}{2}(p - 1).
  \]
\end{proposition}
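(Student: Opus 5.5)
Fix a knot $ K $ with $ \varepsilon(K) \neq 0 $. A genus-$ g $ cobordism $ \Sigma $ between $ P $ and $ Q $ in $ (S^1 \times D^2) \times [0,1] $ induces a genus-$ g $ cobordism between $ P(K) $ and $ Q(K) $ in $ S^3 \times [0,1] $. By the Livingston argument recalled before Lemma~\ref{lemma:squeezing-tau},
\[
  |\tau(P(K)) - \tau(Q(K))| \leq g(\Sigma).
\]
The winding number is a cobordism invariant, so $ Q $ also has winding number $ p $. Proposition~\ref{prop:tau-of-homom-is-mult-by-winding-number} then gives $ \tau(Q(K)) = p\tau(K) $. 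On the other side, the hypothesis together with Theorem~\ref{thm:tau-of-cable} gives
\[
  \tau(P(K)) = p\tau(K) + \tfrac{1}{2}(p-1)(q - \varepsilon(K)) + c .
\]
Subtracting, the $ p\tau(K) $ terms cancel, leaving
\[
  \tau(P(K)) - \tau(Q(K)) = A - \tfrac{1}{2}(p-1)\varepsilon(K), \qquad A = \tfrac{1}{2}(p-1)q + c .
\]

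Next I apply this with two choices of $ K $: one with $ \varepsilon(K) = 1 $, for instance the right-handed trefoil, and one with $ \varepsilon(K) = -1 $, for instance its mirror. This gives
\[
  g(\Sigma) \geq \max\bigl(|A - \tfrac{1}{2}(p-1)|,\ |A + \tfrac{1}{2}(p-1)|\bigr) = |A| + \tfrac{1}{2}(p-1).
\]
The last equality holds because, for $ b \geq 0 $, $ \max(|A-b|, |A+b|) = |A| + b $: pick the sign of $ \varepsilon $ that matches the sign of $ A $. This is exactly the claimed bound.

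There is no real obstacle here. The one point to check is that the winding number of $ Q $ equals $ p $, so that Miller's proposition gives $ p\tau(K) $ rather than some other multiple. This follows from the cobordism invariance of winding number noted at the start of Section~\ref{sec:squeezed-patterns}. One should also confirm that $ \Sigma $ may be taken in either direction; this is harmless because the bound used is symmetric.
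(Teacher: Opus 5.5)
Your proposal is correct and is essentially the paper's proof. You bound $g(\Sigma)$ below by $|\tau(P(K)) - \tau(Q(K))|$ for one knot with $\varepsilon = 1$ and one with $\varepsilon = -1$, evaluate $\tau(P(K))$ via the cable formula and $\tau(Q(K)) = p\tau(K)$ via Miller's proposition, and take the maximum; your explicit check that $Q$ has winding number $p$ is a step the paper also uses without stating it. That check presumes $P$ itself has winding number $p$, which is not literally a hypothesis but holds in the intended application to squeezed patterns, and in general it can be bypassed: if $Q$'s winding number differed from $p$, knots with $\varepsilon = 1$ and arbitrarily large $\tau$ would show that no cobordism $\Sigma$ exists.
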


\begin{proof}[Proof of Corollary~\ref{cor:squeezed-not-homomorphism}]
By Theorem~\ref{thm:tau-of-squeezed-pattern-satellite}, the squeezed pattern $ P $ satisfies the hypotheses of Proposition~\ref{prop:squeezed-far-from-homomorphism}. If $ g(\Sigma) < \lfloor \frac{p - 1}{2} \rfloor $, then $ \Sigma $ does not satisfy the genus bound in Proposition~\ref{prop:squeezed-far-from-homomorphism}, so $ Q $ does not induce a homomorphism.
\end{proof}

\begin{remark}
One obvious obstruction to a pattern $ P $ inducing a homomorphism is that the knot $ P(U) $ must be slice. We call such patterns \textit{slice}. If a pattern $ P $ is not slice, we can make a slice pattern $ P^\# $ out of it by connected-summing on the knot $ -P(U) $ in the solid torus, as in the following figure.

\begin{figure}[H]
  \centering
  \labellist
  \pinlabel {$ P $} [B] at \topt{25mm} \topt{0mm}
  \pinlabel {$ P^\# $} [B] at \topt{85mm} \topt{0mm}
  \endlabellist
  \includegraphics{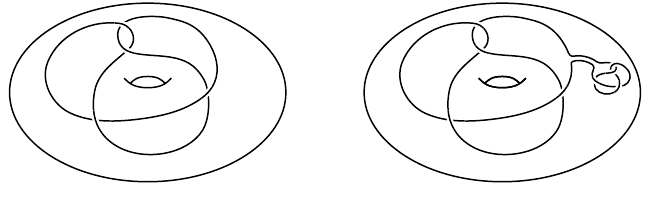}
  \caption{A pattern $ P $ and its corresponding slice pattern $ P^\# $.}
  \label{fig:make-pattern-slice}
\end{figure}

Or, in other words, the satellite $ P^\#(K) $ is $ P(K) \cs -P(U) $. So, when $ P $ is squeezed with lower squeezing cobordism $ \Sigma^-\colon C_{p, -q} \to P $, its corresponding slice pattern $ P^\# $ has the following partial $ \tau $ formula for $ K $ with $ \varepsilon(K) \neq 0 $:
\[
  \tau(P^\#(K)) = \tau(P(K)) - \tau(P(U)) = \tau(K_{p, -q}) + g(\Sigma^-) - \tau(P(U)).
\]
So, for $ \varepsilon(K) \neq 0 $, $ \tau(P^\#(K)) $ also differs from a $ \tau(K_{p, -q}) $ by a uniform constant. This is all to say that a squeezed pattern $ P $ might easily be seen not to induce a homomorphism if $ P(U) $ is not slice, but Proposition~\ref{prop:squeezed-far-from-homomorphism} tells us that $ P^\# $ does not induce a homomorphism either.
\end{remark}

It follows quickly from the slice-torus properties in Theorem~\ref{thm:tau-is-slice-torus} that the slice genus bound can be strengthened to $ |\tau(K)| \leq g_4(K) $. For a cobordism of knots $ \Sigma\colon K \to K' $ in $ S^3 \times [0, 1] $, the corresponding stronger genus bound is $ g(\Sigma) \geq |\tau(K') - \tau(K)| $. We use this stronger bound in the following proof.

\begin{proof}[Proof of Proposition~\ref{prop:squeezed-far-from-homomorphism}]
Pick a knot $ K^+ $ with $ \varepsilon(K^+) = 1 $. Then, using the formulas for $ \tau(P(K^+)) $ by hypothesis and $ \tau(Q(K^+)) $ by Proposition~\ref{prop:tau-of-homom-is-mult-by-winding-number}, we have
\[
  g(\Sigma)
  \geq |\tau(P(K^+)) - \tau(Q(K^+))|
  = \left| \frac{1}{2}(p - 1)(q - 1) + c \hspace{1pt}\right|.
\]
Similarly, for $ K^- $ with $ \varepsilon(K^-) = -1 $, we have 
\[
  g(\Sigma)
  \geq |\tau(P(K^-)) - \tau(Q(K^-))|
  = \left| \frac{1}{2}(p - 1)(q + 1) + c \hspace{1pt}\right|.
\]
Putting these bounds together gives 
\[
\begin{aligned}
  g(\Sigma)
  &\geq \max \left( \left| \frac{1}{2}(p - 1)(q - 1) + c \hspace{1pt}\right|, \left| \frac{1}{2}(p - 1)(q + 1) + c \hspace{1pt}\right| \right) \\
  &= \left| \frac{1}{2}(p - 1)q + c \hspace{1pt}\right| + \frac{1}{2}(p - 1),
\end{aligned}
\]
as desired.
\end{proof}

\section{Examples and non-examples}

The most straightforward obstruction to a pattern being squeezed is that its $ \tau $ formula must be of the form described in Theorem~\ref{thm:tau-of-squeezed-pattern-satellite}. A simple test for this is that for a squeezed pattern $ P $ with winding number $ p $, we must have 
\[
  \tau(P(T_{2,3})) - \tau(P(-T_{2,3})) = p + 1,
\]
using the fact that $ \varepsilon(\pm T_{2, 3}) = \pm 1 $~\cite[Proposition 3.6]{hom-bordered-heegaard-floer-14}.

\begin{example}
In~\cite{patwardhan-xiao-generalized-mazur-patterns-24-preprint}, Patwardhan and Xiao define a family of ``generalized Mazur patterns'' $ Q_{m, n} $ and give a $ \tau $ formula for them. For integers $ m > n > 1 $, the pattern $ Q_{m, n} $ has winding number $ m - n $, but applying the $ \tau $ formula gives
\[
  \tau(Q_{m, n}(T_{2,3})) - \tau(Q_{m, n}(-T_{2,3})) = m > m - n + 1.
\]
So, these patterns fail our test and give infinitely many examples of non-squeezed patterns for each positive winding number.
\end{example}

\begin{figure}
  \labellist
  \pinlabel {$ P_\text{L6a1} $} at \topt{15mm} \topt{32.5mm}
  \pinlabel {$ P_\text{L7a2} $} at \topt{45mm} \topt{32.5mm}
  \pinlabel {$ P_\text{L8a5} $} at \topt{75mm} \topt{32.5mm}
  \pinlabel {$ P_\text{L8n1} $} at \topt{105mm} \topt{32.5mm}
  \endlabellist
  \centering
  \includegraphics{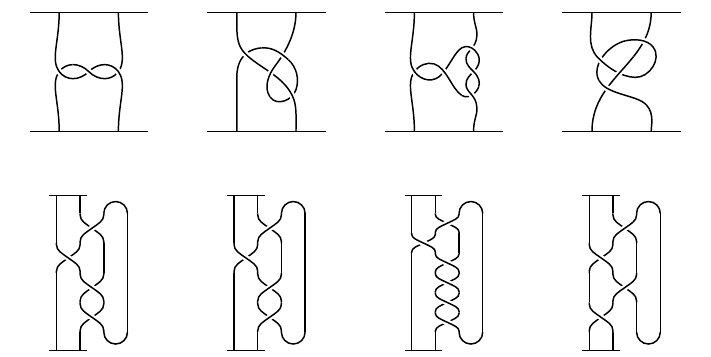}
  \caption{Four patterns corresponding to small crossing number links and their representations as braids with two strands running around the solid torus and one strand not. To save space, patterns are drawn in $ D^2 \times [0, 1] $ with the top and bottom edges identified.}
  \label{fig:small-patterns}
\end{figure}

A 2-component link in $ S^3 $ with a distinguished unknotted component corresponds to a pattern in the solid torus by removing a tubular neighborhood of that unknotted component. We consider such patterns coming from small crossing number links in the LinkInfo table~\cite{linkinfo}. For each of the links we discuss, there is either only one unknotted component, or an isotopy that exchanges the two unknotted components, so each link corresponds to a unique pattern, which we denote by its name in LinkInfo (e.g. $ P_\text{L6a1} $).

\begin{example}
The patterns corresponding to L6a1, L7a2, L8a5, and L8n1 are not braided, but they can be written as index-3 braids with only two of their strands traveling around the solid torus, as in Figure~\ref{fig:small-patterns}. Recall from the discussion at the beginning of Section~\ref{sec:squeezed-patterns} that to show these patterns are squeezed, it suffices to find them as a slice of a cobordism $ \Sigma\colon C_{2, q} \to C_{2, r} $ with $ \chi(\Sigma) = -(r - q) $. Using a computer search that applies braid relations, resolves crossings, and changes crossings, we find such cobordisms in Figure~\ref{fig:small-pattern-cobordisms}. The squeezing cobordisms for $ P_\text{L6a1} $ and $ P_\text{L8a5} $ easily generalize to an infinite family of squeezed patterns.
\end{example}

\begin{figure}
  \begin{subfigure}{\textwidth}
    \centering
    \labellist
    \footnotesize
    \pinlabel {$ -k $} at \topt{6mm} \topt{54.5mm}
    \pinlabel {$ -k $} at \topt{26mm} \topt{54.5mm}
    \pinlabel {$ -k $} at \topt{58mm} \topt{44.5mm}
    \pinlabel {$ -k $} at \topt{86mm} \topt{44.5mm}
    \pinlabel {$ -k $} at \topt{86mm} \topt{9.5mm}
    \pinlabel {isotopy} [B] at \topt{16mm} \topt{53mm}
    \pinlabel {isotopy} [B] at \topt{44mm} \topt{53mm}
    \pinlabel {change \bluecrossing} [B] at \topt{72mm} \topt{53mm}
    \pinlabel {\parbox{10em}{\centering resolve \\ all \redcrossing, \\ cap off \\ disk}} [b] at \topt{100mm} \topt{52mm}
    \pinlabel {isotopy} [tr] at \topt{75mm} \topt{35mm}
    \pinlabel {$ C_{2,-2k-3} $} [t] at \topt{6mm} \topt{37mm}
    \pinlabel {$ C_{2,1} $} [t] at \topt{110mm} \topt{37mm}
    \pinlabel {$ P_k $} [Bl] at \topt{94mm} \topt{4mm}
    \endlabellist
    \includegraphics{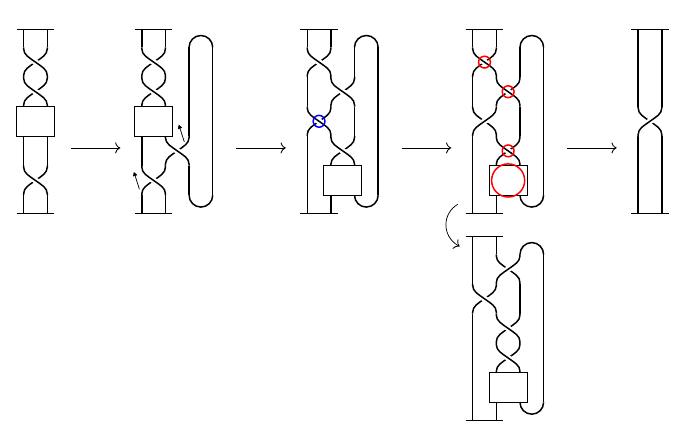}
    \caption{The box labeled ``$ -k $'' represents $ k $ negative full twists for any $ k \geq 0 $. Above is a cobordism $ \Sigma\colon C_{2,-2k-3} \to C_{2, 1} $ with $ \chi(\Sigma) = -2k - 4 $ and $ P_k $ as a slice. The squeezed pattern is $ P_\text{L6a1} $ for $ k = 0 $ and $ P_\text{L8a5} $ for $ k = 1 $.}
  \end{subfigure}
  \begin{subfigure}{\textwidth}
    \centering
    \labellist
    \footnotesize
    \pinlabel {\parbox{10em}{\centering add disk, \\ 3 bands}} [b] at \topt{16mm} \topt{22mm}
    \pinlabel {change \bluecrossing} [B] at \topt{44mm} \topt{23mm}
    \pinlabel {isotopy} [B] at \topt{72mm} \topt{23mm}
    \pinlabel {isotopy} [B] at \topt{100mm} \topt{23mm}
    \pinlabel {$ C_{2,-1} $} [t] at \topt{6mm} \topt{7mm}
    \pinlabel {$ P_\text{L7a2} $} [t] at \topt{30mm} \topt{7mm}
    \pinlabel {$ C_{2,3} $} [t] at \topt{110mm} \topt{7mm}
    \endlabellist
    \includegraphics{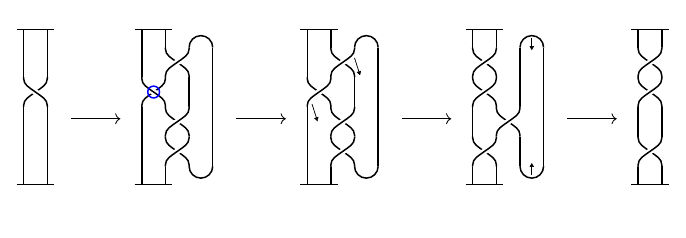}
    \caption{A cobordism $ \Sigma\colon C_{2, -1} \to C_{2, 3} $ with $ \chi(\Sigma) = -4 $ and $ P_\text{L7a2} $ as a slice.}
  \end{subfigure}
  \begin{subfigure}{\textwidth}
    \centering
    \labellist
    \footnotesize
    \pinlabel \parbox{10em}{\centering add disk, \\ 2 bands} [b] at \topt{16mm} \topt{22mm}
    \pinlabel \parbox{10em}{\centering change \bluecrossing, \\ resolve \redcrossing} [b] at \topt{44mm} \topt{22mm}
    \pinlabel {isotopy} [B] at \topt{72mm} \topt{23mm}
    \pinlabel {isotopy} [B] at \topt{100mm} \topt{23mm}
    \pinlabel {$ C_{2,-2} $} [t] at \topt{6mm} \topt{7mm}
    \pinlabel {$ P_\text{L8n1} $} [t] at \topt{30mm} \topt{7mm}
    \pinlabel {$ C_{2,2} $} [t] at \topt{110mm} \topt{7mm}
    \endlabellist
    \includegraphics{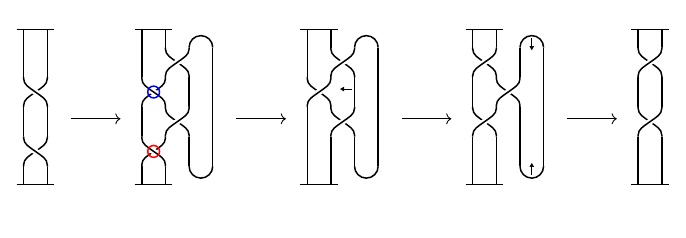}
    \caption{A cobordism $ \Sigma\colon C_{2, -2} \to C_{2, 2} $ with $ \chi(\Sigma) = -4 $ and $ P_\text{L8n1} $ as a slice.}
  \end{subfigure}
  \caption{Squeezing cobordisms for the patterns in Figure~\ref{fig:small-patterns}.}
  \label{fig:small-pattern-cobordisms}
\end{figure}

\vfill

\pagebreak

\begin{example}
To construct a non-braided squeezed pattern, we can return to the cobordisms discussed in the proof of Theorem~\ref{thm:tau-of-braided-satellites}, those from $ C_{p, -q} $ to $ C_{p, q} $ constructed by attaching a band resolving each negative crossing and then attaching another band to put a positive crossing in its place. Instead of attaching the resolving band ``straight across,'' we can attach it in a way that is ``locked'' around some number of crossings, as in Figure~\ref{fig:construct-squeezed-pattern}. After changing the sign of those crossings, the locked band comes free and can resolve the crossing as normal. One can construct non-braided squeezed patterns of any winding number this way. The particular pattern constructed in Figure~\ref{fig:construct-squeezed-pattern} is $ P_\text{L9a7} $.

\begin{figure}
  \centering
  \labellist
  \footnotesize
  \pinlabel \parbox{10em}{\centering resolve \redcrossing, \\ attach \\ ``locked'' \\ band} [b] at \topt{24mm} \topt{22mm}
  \pinlabel \parbox{10em}{\centering change \bluecrossing} [b] at \topt{60mm} \topt{21mm}
  \pinlabel \parbox{10em}{\centering isotopy} [b] at \topt{96mm} \topt{21mm}
  \small
  \pinlabel {$ C_{2, -3} $} [B] at \topt{8mm} \topt{2mm}
  \pinlabel {$ P_\text{L9a7} $} [B] at \topt{40mm} \topt{2mm}
  \pinlabel {$ C_{2, 1} $} [B] at \topt{112mm} \topt{2mm}
  \endlabellist
  \includegraphics{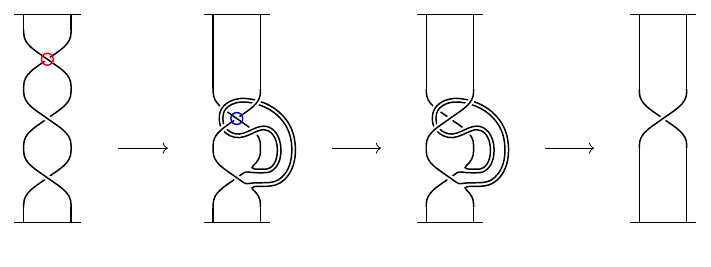}
  \caption{A construction of a non-braided squeezed pattern.}
  \label{fig:construct-squeezed-pattern}
\end{figure}
\end{example}

\bibliographystyle{alpha}
\bibliography{library}

\begin{thebibliography}{HRW24}

\bibitem[Bod24]{bodish-genus-fiberedness-t-24-preprint}
Holt Bodish.
\newblock Genus, fiberedness, $\tau$ and $\varepsilon$ of satellite knots with
  $n$-twisted generalized {{Mazur}} patterns.
\newblock Preprint, May 2024.
\newblock \href{http://arxiv.org/abs/2405.08763}{\tt arXiv:2405.08763}.

\bibitem[CH23]{chen-hanselman-satellite-knots-immersed-23-preprint}
Wenzhao Chen and Jonathan Hanselman.
\newblock Satellite knots and immersed {{Heegaard Floer}} homology.
\newblock Preprint, September 2023.
\newblock \href{http://arxiv.org/abs/2309.12297}{\tt arXiv:2309.12297}.

\bibitem[CZZ25]{chen-zemke-zhou-applications-lspace-satellite-25-preprint}
Daren Chen, Ian Zemke, and Hugo Zhou.
\newblock Applications of the {{L-space}} satellite formula.
\newblock Preprint, September 2025.
\newblock \href{http://arxiv.org/abs/2509.20288}{\tt arXiv:2509.20288}.

\bibitem[FLL24]{feller-lewark-lobb-squeezed-knots-24}
Peter Feller, Lukas Lewark, and Andrew Lobb.
\newblock Squeezed knots.
\newblock {\em Quantum Topology}, 16(4):831--865, March 2024.

\bibitem[Hed07]{hedden-knot-floer-homology-07}
Matthew Hedden.
\newblock Knot {{Floer}} homology of {{Whitehead}} doubles.
\newblock {\em Geometry \& Topology}, 11(4):2277--2338, December 2007.

\bibitem[Hom14]{hom-bordered-heegaard-floer-14}
Jennifer Hom.
\newblock Bordered {{Heegaard Floer}} homology and the tau-invariant of cable
  knots.
\newblock {\em Journal of Topology}, 7(2):287--326, June 2014.

\bibitem[HP21]{hedden-pinzon-caicedo-satellites-infinite-rank-21}
Matthew Hedden and Juanita {Pinz{\'o}n-Caicedo}.
\newblock Satellites of infinite rank in the smooth concordance group.
\newblock {\em Inventiones mathematicae}, 225(1):131--157, July 2021.

\bibitem[HR23]{hedden-raoux-4dimensional-rational-genus-23-preprint}
Matthew Hedden and Katherine Raoux.
\newblock A 4-dimensional rational genus bound.
\newblock Preprint, August 2023.
\newblock \href{http://arxiv.org/abs/2308.16853}{\tt arXiv:2308.16853}.

\bibitem[HRW24]{hanselman-rasmussen-watson-bordered-floer-homology-24}
Jonathan Hanselman, Jacob Rasmussen, and Liam Watson.
\newblock Bordered {{Floer}} homology for manifolds with torus boundary via
  immersed curves.
\newblock {\em Journal of the American Mathematical Society}, 37(2):391--498,
  April 2024.

\bibitem[JKM25]{johanningsmeier-kim-miller-partial-resolution-heddens-25}
Randall Johanningsmeier, Hillary Kim, and Allison~N. Miller.
\newblock A partial resolution of {{Hedden}}'s conjecture on satellite
  homomorphisms.
\newblock {\em Mathematical Proceedings of the Cambridge Philosophical
  Society}, 180(1):93--104, August 2025.

\bibitem[Lev16]{levine-nonsurjective-satellite-operators-16}
Adam~Simon Levine.
\newblock Nonsurjective satellite operators and piecewise-linear concordance.
\newblock {\em Forum of Mathematics, Sigma}, 4:e34, December 2016.

\bibitem[Lew14]{lewark-rasmussens-spectral-sequences-14}
Lukas Lewark.
\newblock Rasmussen's spectral sequences and the
  $\mathfrak{sl}_{N}$-concordance invariants.
\newblock {\em Advances in Mathematics}, 260:59--83, August 2014.

\bibitem[Liv04]{livingston-computations-ozsvath-szabo-04}
Charles Livingston.
\newblock Computations of the {{Ozsv\'ath}}--{{Szab\'o}} knot concordance
  invariant.
\newblock {\em Geometry \& Topology}, 8(2):735--742, May 2004.

\bibitem[LM26]{linkinfo}
Charles Livingston and Allison~H. Moore.
\newblock {{LinkInfo}}: {{Table}} of link invariants.
\newblock \url{https://linkinfo.knotinfo.org/}, June 2026.

\bibitem[LMP24]{lidman-miller-pinzon-caicedo-linking-number-obstructions-24}
Tye Lidman, Allison~N. Miller, and Juanita {Pinz{\'o}n-Caicedo}.
\newblock Linking number obstructions to satellite homomorphisms.
\newblock {\em Quantum Topology}, 17(2):431--466, July 2024.

\bibitem[LOT18]{lipshitz-ozsvath-thurston-bordered-heegaard-floer-18}
Robert Lipshitz, Peter Ozsv{\'a}th, and Dylan Thurston.
\newblock Bordered {{Heegaard Floer}} homology: {{Invariance}} and pairing.
\newblock {\em Memoirs of the American Mathematical Society}, 254(1216), July
  2018.

\bibitem[Mil23]{miller-homomorphism-obstructions-satellite-23}
Allison~N. Miller.
\newblock Homomorphism obstructions for satellite maps.
\newblock {\em Transactions of the American Mathematical Society, Series B},
  10(8):220--240, February 2023.

\bibitem[OS03]{ozsvath-szabo-knot-floer-homology-03}
Peter Ozsv{\'a}th and Zolt{\'a}n Szab{\'o}.
\newblock Knot {{Floer}} homology and the four-ball genus.
\newblock {\em Geometry \& Topology}, 7(2):615--639, October 2003.

\bibitem[PX24]{patwardhan-xiao-generalized-mazur-patterns-24-preprint}
Jay Patwardhan and Zheheng Xiao.
\newblock Generalized {{Mazur}} patterns and immersed {{Heegaard Floer}}
  homology.
\newblock Preprint, October 2024.
\newblock \href{http://arxiv.org/abs/2404.14578}{\tt arXiv:2404.14578}.

\bibitem[VC10]{vancott-ozsvathszabo-rasmussen-invariants-10}
Cornelia~A. Van~Cott.
\newblock Ozsvath-{{Szabo}} and {{Rasmussen}} invariants of cable knots.
\newblock {\em Algebraic \& Geometric Topology}, 10(2):825--836, April 2010.

\end{thebibliography}

\end{document}